\theoremstyle{plain}
\newtheorem{theorem}{Theorem}
\newtheorem{proposition}[theorem]{Proposition}
\newtheorem{lemma}[theorem]{Lemma}
\theoremstyle{definition}
\newtheorem{example}[theorem]{Example}
\newtheorem{remark}[theorem]{Remark}%
\newcommand{\R}{{\mathbb R}}
\newcommand{\cQ}{\mathcal{Q}}
\newcommand{\cB}{\mathcal{B}}
\newcommand{\dd} {\; \mathrm{d}}
\newcommand{\dsigma}{\dd \sigma}
\newcommand{\dt}{\dd t}
\newcommand{\dr}{\dd r}
\newcommand{\dz}{\dd z}
\newcommand{\dw}{\dd w}
\newcommand{\dv}{\dd v}
\newcommand{\fin}{f_{\mathrm{in}}}
\newcommand{\Lambdaloc}{\Lambda_{\Delta}}
\newcommand{\bhs}{{b_{\mathrm{hs}}}}
\title[Fisher information for the Boltzmann equation]{Fisher information for solutions of the Boltzmann equation}
\author[C. \lastname{Imbert}]{\firstname{Cyril} \lastname{Imbert}}
\address{Département de mathématiques et applications\\
\'Ecole normale supérieure, Université PSL, CNRS\\
75005 Paris, France}
\thanks{Cyril Imbert thanks the Simons Laufer Mathematical Sciences Institute in Berkeley, California, where
he was in residence when this note was written (NSF Grant No. DMS-2424139).}
\email{cyril.imbert@math.cnrs.fr}
\keywords{The Boltzmann equation, Fisher information, Log-Sobolev inequality}
\subjclass{76P05, 82C40, 29B72, 35A01}
\begin{document}

\begin{abstract}
  This note reviews a recent contribution about the Fisher information for the space-homogeneous Boltzmann equation by L.~Silvestre, C.~Villani and the author (\href{https://arxiv.org/abs/2409.01183}{\textit{arXiv, 2024}}).
  This classical functional from information theory is shown to be non-increasing along the flow of the non-linear PDE for all physically relevant particle interactions. The proof consists in establishing
  a new functional inequality on the sphere of Log-Sobolev type. This new a priori estimate on solutions yields global-in-time well posedness of the equation, in particular in the case of very singular interactions, a left open question up to this work. 
\end{abstract}

\begin{altabstract}
  Cette note est consacrée à un résultat récemment obtenu par L.~Silvestre, C.~Villani et l'auteur de ce texte  sur l'information de Fisher  pour l'équation de Boltzmann homogène en espace   (\href{https://arxiv.org/abs/2409.01183}{\textit{arXiv, 2024}}).
  Nous verrons que cette fonctionnelle de la théorie de l'information décroît le long du flot de l'équation non-linéaire pour toutes les interactions interparticulaires physiquement importantes.
  La démonstration consiste à établir une nouvelle inégalité fonctionnelle sur la sphère de type Log-Sobolev. Cette nouvelle estimée a priori sur les solutions permet de montrer le caractère bien posé de l'équation, notamment dans le cas des interactions très singulières, une question  restée ouverte jusqu'à ce travail. 
\end{altabstract}

\maketitle

\section{Introduction}

We report on the recent contribution \cite{imbert2024monotonicityfisherinformationboltzmann} about the space-homogeneous Boltzmann equation,
\begin{equation} \label{e:boltz}
  \partial_t f = Q(f,f), \quad t>0, x \in \R^d.
\end{equation}
The unknown function $f$ is real-valued, non-negative, and defined on $(0,+\infty) \times \R^d$. It represents the  density function associated
with the dynamics of a rarefied gas. The collision operator $Q(f,f)$ is defined by,
\[ Q (f,f) := \int_{w \in \R^d} \int_{\sigma' \in S^{d-1}} \bigg( f(v')f(w') - f(v) f(w) \bigg) B (|v-w|, \sigma \cdot \sigma') \dsigma' \dw \]
where $S^{d-1}$ denotes the unit sphere of $\R^d$. The function $B$ encodes the choice of particle interaction potential from which
the force created in any pair of particles derives. It is called the \emph{collision kernel}. Velocities $v'$ and $w'$ and the direction $\sigma$ are defined
by the following formulas,
\begin{equation}\label{e:vwcos}
  v' = \frac{v+w}2 + \frac{|v-w|}{2} \sigma', \quad w' = \frac{v+w}2 - \frac{|v-w|}{2} \sigma', \quad \sigma = \frac{v-w}{|v-w|}.
\end{equation}

\subsection{Entropy and Fisher information}

The Boltzmann equation describes the dynamics of the  density function of particles of a gas in the phase space (position $x$ and velocity $v$). 
After J.~Maxwell derived it in 1867 \cite{maxw:67}, L.~Boltzmann \cite{Boltzmann1970} introduced the concept of \emph{entropy} of the gas of particles
in order to study the long time behaviour of solutions,
\[ S (f) := - \int (\log f) f.\]
He made the seminal observation that this functional increases along the flow of the equation. This observation is now known
as Boltzmann's H-theorem and can be thought as a quantitative version of the irreversibility principle from Thermodynamics.  

The mathematical study of this non-linear equation lying at the center of  statistical physics started with works by T.~Carleman \cite{zbMATH03126493}.  
In order to better understand properties of solutions, he assumed that the gas is space-homogeneous: the repartition of velocities of particles
is the same at every position $x$. Even under such a strong assumption, the study of the resulting equation~\eqref{e:boltz} is very challenging.
Since the 1930's, an particularly in the last fifty years, numerous important contributions to the study of \eqref{e:boltz} were made. In many (if not most)
of these subsequent works, in particular since the mid 1990's,
the use of the \emph{entropy production} term, that is to say the opposite of the time derivative of the entropy, is key. 
We refer the reader to \cite{imbert2024monotonicityfisherinformationboltzmann} and references therein for more details. 
Despite these important contributions, the most singular interactions between particles were still out of reach, partly because the estimate of the
entropy production term were never proved sufficient to control solutions in these cases.  

In \cite{imbert2024monotonicityfisherinformationboltzmann}, this gap is closed by establishing a new a priori estimate: 
the Fisher information of a function $f \colon \R^d \to (0,+\infty)$, defined by
\[ I (f) := \int_{\R^d} |\nabla \log f|^2 f \dv \]
is shown to decrease with time for all physically relevant and important kernels: hard spheres, hard / moderately soft / very soft potentials,
under Grad's cut-off assumption of without it.

This work follows the breakthrough by N.~Guillen and L.~Silvestre \cite{landaufisher2023} about the Landau equation. This other nonlinear kinetic equation can be obtained from the Boltzmann equation through the so-called \emph{grazing collision} limit. The reason for considering with the Landau equation instead of the Boltzmann one is the following. 
Boltzmann's collision kernel does not make sense for Coulomb interactions because  the corresponding collision kernel turns out to be too singular. For this reason, L.~Landau \cite{landau:Coulomb:36} introduced an equation that nowadays bears his name.

\subsection{Boltzmann's collision operator}

\subsubsection{Collision kernels}

The collision kernel $B$, appearing in the formula defining Boltzmann's collision operator $Q$,  can take various forms, depending on the nature of
collisions between particles. 
\bigskip

Collision kernels $B(r,c)$ of the form $\alpha (r/2) b (c)$  are widely considered in the literature, starting with the seminal article
by J.~Maxwell himself. The functions $\alpha$ and $b$ are non-negative and respectively defined in $(0,+\infty)$ and $[-1,1]$. Such collision kernels cover the case of \emph{hard spheres} and \emph{inverse power-law potentials}.
The hard sphere case correspond to particles colliding like billiard balls. 
As far as inverse power-law potentials are concerned, they are inversely proportional to the $q$ power of the
distance between interacting particles. The parameter $q > d-1$ so that the potential decreases faster than the Coulombian one and $q \ge (d+1)/2$ so that
Condition~\eqref{e:levy} is satisfied. In contrast with the hard sphere case, the corresponding collision kernels can be very singular. 
\begin{example}[Hard spheres and inverse power laws]
  For hard spheres, $\alpha (r) = r$ and $b (\cos \theta ) =  |\sin (\theta /2)|^{3-d}$ or equivalently $b(c) = 2^{d-3}(1 -c)^{\frac{3-d}2}$. In particular, $b$ is constant in three dimensions.
  For inverse-power laws, $\alpha (r)=r^\gamma$ and $b(c) \simeq (1-c^2)^{-\frac{d-1+2s}2}$ with $2s = \frac{d-1}{q-1}$ and $\gamma :=1-4s$.
  In particular, $s \in [0,1]$ and $\gamma \in [-3,1]$.
\end{example}

Our approach allows us to deal with more general kernels, not necessarily factorized, see \cite{imbert2024monotonicityfisherinformationboltzmann}. In this note, we stick to the factorized case but  we consider general functions $b$. We just saw that they can be singular. This being said, they have to satisfy,
\begin{equation} \label{e:levy}
  \forall \sigma \in S^{d-1}, \qquad \int_{S^{d-1}} ( 1 - (\sigma \cdot \sigma')^2) b(\sigma \cdot \sigma') \dsigma' < +\infty.
\end{equation}
This condition  ensures that Boltzmann's collision operator makes sense. 
It is is reminiscent of the definition of Lévy measures \cite{sato}.

\subsubsection{Spherical linear Boltzmann operator}

It is useful and somewhat natural when studying \eqref{e:boltz} to consider the following integro-differential operator on the sphere,
\[ \cB f (\sigma) = \int_{\sigma' \in S^{d-1}} \bigg (f(\sigma') - f(\sigma) \bigg) b (\sigma' \cdot \sigma) \dsigma'.\]
D.~Bakry and M.~\'Emery \cite{zbMATH03894218} associated a  \emph{carré du champ} operator to any diffusion semi-group, such as the semi-group generated by the Laplacian operator $\Delta$ on the sphere.
They also considered the iterated \emph{carré du champ} $\Gamma^2$. In the case of the Laplacian on the sphere, it is given by $\Gamma^2_{\Delta,\Delta} (f,g)  = \nabla_\sigma f \cdot \nabla_\sigma g$. 
A  \emph{carré du champ} operator $\Gamma_\cB$ and an iterated one $\Gamma^2_{\cB,\cB}$ can also be defined for the integro-differential operator $\cB$. Notice that it is not a diffusion in the sense of Bakry and \'Emery. 
We may next intertwine these two diffusion operators $\Delta$ and $\cB$ and consider,
\[ \Gamma^2_{\cB,\Delta}(f,g) = \frac12 \left( \cB (\Gamma_\Delta (f,g)) - \Gamma_\Delta (\cB f,g) - \Gamma_\Delta (f, \cB g)  \right) .\]
We notice that it is not obvious that this quantity is non-negative and we will see that computing it in dimension $d \ge 3$ is delicate (see Section~\ref{s:curvature}).  

\subsection{A criteria for the monotonicity of the Fisher information}

We first give a sufficient condition for the monotonicity of the Fisher information. It relates the function $\alpha$ with the largest constant $\Lambda_b \ge 0$ such that
the following functional inequality holds: for all functions $f \colon S^{d-1} \to (0,\infty)$ such that $f (-\sigma) = f(\sigma)$,
\begin{equation}
  \label{e:log-sob}
  \int_{S^{d-1}} \Gamma^2_{\cB,\Delta} (\log f, \log f) f \dsigma \ge \Lambda_b \iint_{S^{d-1} \times S^{d-1}} \frac{(f(\sigma') - f(\sigma))^2}{f(\sigma')+f(\sigma)} b (\sigma' \cdot \sigma) \dsigma' \dsigma.
\end{equation}
\begin{theorem}[Sufficient condition for the monotonicity of Fisher -- {\cite{imbert2024monotonicityfisherinformationboltzmann}}] \label{t:criterion}
  Assume $b$ satisfies \eqref{e:levy} and the function $\alpha$ satisfies
  \[ \frac{r |\alpha' (r)|}{2 \alpha (r)} \le \sqrt{\Lambda_b}.\]
Then any solution of the space-homogeneous Boltzmann equation~\eqref{e:boltz} with collision kernel $B = \alpha (r/2) b (\cos \theta)$ is non-increasing in time. 
\end{theorem}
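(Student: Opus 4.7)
The strategy is to differentiate $I(f)$ along the Boltzmann flow, transport the computation to the product space of pairs $(v,w) \in \R^d \times \R^d$, and then, for each fixed pair, reduce the bookkeeping to a computation on the collision sphere so that the functional inequality \eqref{e:log-sob} can be invoked.

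First, a direct calculation gives
\[ \frac{d}{dt} I(f) \;=\; -\int_{\R^d} \bigl( 2\Delta \log f + |\nabla \log f|^2 \bigr) \, Q(f,f) \dv, \]
and unfolding $Q(f,f)$ through its symmetric weak form (using both the $v \leftrightarrow w$ and $(v,w) \leftrightarrow (v',w')$ symmetries) rewrites the right-hand side as a triple integral in $(v, w, \sigma')$ weighted by $B(|v-w|,\sigma \cdot \sigma')$. It is then natural to switch to center-of-mass and relative-velocity coordinates $(z,u) = (\tfrac{v+w}{2}, \tfrac{v-w}{2})$, so that $v' = z + |u|\sigma'$ and $w' = z - |u|\sigma'$ move along the sphere of radius $r = |v-w|$ centered at $z$, parametrized by $\sigma'$; observe that $v$ and $w$ themselves lie on the same sphere, at $\sigma' = \pm \sigma$.

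Second, with $(z,r)$ held fixed, view the integrand as a functional of $\sigma' \in S^{d-1}$. The derivatives of $\log f$ appearing in the integrand split into a spherical part (tangent to the collision sphere) and a radial part (along $u$). The spherical contribution is precisely what the Bakry--Émery formalism recognizes as $\int_{S^{d-1}} \Gamma^2_{\cB,\Delta}(\log F, \log F)\, F \dsigma'$, where $F(\sigma') := f(z + \tfrac{r}{2}\sigma')\, f(z - \tfrac{r}{2}\sigma')$ is the natural even restriction of $ff_*$ to the sphere; this is exactly where the intertwined operator $\Gamma^2_{\cB,\Delta}$ emerges. The radial part, after the change of variables in $r$, produces a cross-term weighted by $\alpha'(r/2)$ which involves the Dirichlet-type expression appearing on the right-hand side of \eqref{e:log-sob}.

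Third, estimate the radial cross-term by Cauchy--Schwarz against that Dirichlet form. The pointwise bound $\tfrac{r|\alpha'(r)|}{2\alpha(r)} \le \sqrt{\Lambda_b}$ supplied by the hypothesis is exactly the threshold needed so that \eqref{e:log-sob} absorbs this cross-term into the spherical curvature quantity, leaving a non-negative remainder. Integrating in $(z,r)$ against the non-negative weights $\alpha$ and $b$ then yields $\frac{d}{dt} I(f) \le 0$.

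The main obstacle is the curvature identification in the second step: because $\cB$ is not a diffusion in the Bakry--Émery sense, $\Gamma^2_{\cB,\Delta}$ has no clean chain-rule expansion, and checking that the spherical part of the integrand really coincides with $\int_{S^{d-1}} \Gamma^2_{\cB,\Delta}(\log F, \log F) F \dsigma'$---with no leftover non-diffusive terms of the wrong sign---is the core analytic step, genuinely delicate in dimension $d \ge 3$, as signalled for Section~\ref{s:curvature}.
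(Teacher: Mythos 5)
Your proposal captures the strategic skeleton of the paper's argument correctly: pass to the pair variable, use center-of-mass/relative-velocity coordinates, split the Fisher functional into pieces along the different directions, identify $\Gamma^2_{\cB,\Delta}$ in the spherical piece, and let the log-Sobolev inequality \eqref{e:log-sob} absorb the radial contribution under the hypothesis on $\alpha$. This is essentially the paper's route. However, there are two substantive omissions worth flagging.

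First, the reduction to the product variable is not merely a rewriting of the collision integral in $(v,w,\sigma')$: the paper's Lemma~\ref{l:tensor} shows the sharp identity $\langle I'(f), Q(f,f)\rangle = \tfrac12\langle I'(F), \cQ F\rangle$ for $F = f\otimes f$, which reduces the \emph{nonlinear} problem to the monotonicity of Fisher along the \emph{linear} semigroup $\partial_t F = \cQ F$. This is the Guillen--Silvestre observation that makes the whole scheme work, and proving the identity requires using the symmetry $F(v,w)=F(w,v)$ carefully. Your proposal gestures at the pair formulation but does not isolate this reduction, so it is not clear from your sketch that you would actually land on a problem about the linear operator $\cQ$.

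Second, your decomposition of $I(F)$ in polar coordinates has only two pieces, radial and spherical, but the paper's decomposition \eqref{e:fisher-dec} has three: $I_z$, $I_r$, and $I_\sigma$. The center-of-mass contribution $\langle I_z'(F), \cQ F\rangle$ is not zero by inspection; showing $\langle I_z'(F), \cQ F\rangle \le 0$ is a separate computation (the paper's Lemma~3.1) and must be included for the total to be signed. Dropping $I_z$ leaves a gap. A smaller inaccuracy: the paper computes $\langle I_r'(F), \cQ F\rangle$ as an \emph{exact} Dirichlet form with weight $(\alpha'(r))^2/\alpha(r)$ (equation \eqref{e:I'rF}), and then the comparison with the $\Gamma^2$ term is a direct pointwise-in-$r$ algebraic inequality via \eqref{e:log-sob}; your description of a ``cross-term weighted by $\alpha'(r/2)$'' absorbed by Cauchy--Schwarz at that stage does not quite match the structure (any square-completion happens inside the derivation of \eqref{e:I'rF}, not in the final absorption step).
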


The proof of this theorem is very close to Guillen and Silvestre's proof of the monotonicity of Fisher for Landau \cite{landaufisher2023}.
It is rather quick and we will review it almost completely in Section~\ref{s:criterion}. 

\subsection{The log-Sobolev inequality on the sphere}

The criterion contained in Theorem~\ref{t:criterion} is useful if we can get a ``good'' lower bound on the constant $\Lambda_b$ in the cases of interest.
For large dimensions, it is possible to get a lower bound for any kernel. 
\begin{theorem}[Lower bound for $\Lambda_b$ in dimension $d \ge 3$] \label{t:curvature}
Assume that $b$ satisfies \eqref{e:levy} and $d \ge 3$. Then \eqref{e:log-sob} holds true with $\Lambda_b \ge d-2$. 
\end{theorem}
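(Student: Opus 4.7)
The plan is to unpack the definition of $\Gamma^2_{\cB,\Delta}$, integrate against $f\dsigma$, symmetrize in $\sigma \leftrightarrow \sigma'$, and then identify the constant $d-2$ as the Ricci curvature lower bound on the sphere $S^{d-1}$ (whose Ricci tensor equals $(d-2)$ times the metric). Setting $h := \log f$, the definition of the iterated carré du champ gives the pointwise formula
\[
\Gamma^2_{\cB,\Delta}(h,h)(\sigma) = \tfrac12\, \cB(|\grad h|^2)(\sigma) - \grad h(\sigma) \cdot \grad(\cB h)(\sigma),
\]
with $\grad$ the spherical gradient on $S^{d-1}$. The first task is to expand each piece using $\cB g(\sigma) = \int (g(\sigma')-g(\sigma)) b(\sigma \cdot \sigma') \dsigma'$, being careful that the individual contributions in $\grad(\cB h)$ are only conditionally integrable: one must combine them before differentiating, and the cancellation is exactly what the Lévy condition \eqref{e:levy} provides.

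After multiplying by $f \dsigma$ and integrating on the closed manifold $S^{d-1}$, two simplifications are available: integration by parts, which turns $\int f \grad h \cdot \grad(\cB h) \dsigma$ into $-\int \Delta f \cdot \cB h \dsigma$ via the identity $\grad \cdot (f \grad h) = \Delta f$ valid when $h = \log f$, together with the self-adjointness of $\cB$ with respect to the uniform measure (since $b(\sigma\cdot\sigma') = b(\sigma'\cdot\sigma)$). The outcome can be recast as a symmetric double integral
\[
\int_{S^{d-1}} \Gamma^2_{\cB,\Delta}(h,h)\, f \dsigma = \iint_{S^{d-1}\times S^{d-1}} G(\sigma,\sigma'; f, \grad f)\, b(\sigma \cdot \sigma')\, \dsigma\, \dsigma'
\]
for an explicit integrand $G$ depending on the values and gradients of $f$ at the two points. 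The problem is then reduced to establishing the pointwise lower bound
\[
G(\sigma,\sigma'; f, \grad f) \ge (d-2)\, \frac{(f(\sigma')-f(\sigma))^2}{f(\sigma')+f(\sigma)}
\]
on $S^{d-1}\times S^{d-1}$.

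The $d-2$ prefactor should emerge from the sphere's Ricci curvature, mirroring the classical Bochner identity $\Gamma^2_{\Delta,\Delta}(h,h) = |\mathrm{Hess}\,h|^2 + (d-2)|\grad h|^2$. To prove the pointwise bound I would work in coordinates adapted to the geodesic joining $\sigma$ and $\sigma'$, Taylor-expand in the geodesic distance, and use the antipodal symmetry $f(-\sigma) = f(\sigma)$ to eliminate odd-order contributions that would otherwise obstruct the estimate. The main obstacle is precisely this last step: producing the sharp curvature constant $d-2$ uniformly in $(\sigma,\sigma')$---including far-apart pairs, where a Taylor expansion at $\sigma$ is inaccurate---while coping with the possibly strong singularity of $b$ near $\sigma' = \sigma$. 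This interplay between the local (curvature) and non-local (jump) features of $\Gamma^2_{\cB,\Delta}$ is what makes the argument delicate; the restriction $d \ge 3$ is exactly the threshold at which the Ricci curvature of $S^{d-1}$ becomes strictly positive.
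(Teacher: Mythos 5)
Your overall plan --- expand $\Gamma^2_{\cB,\Delta}$, multiply by $f\dsigma$, symmetrize in $\sigma\leftrightarrow\sigma'$, and then compare the resulting double integral against the right-hand side of \eqref{e:log-sob} \emph{integrand against integrand} --- breaks down at the last step, and the failure is structural rather than technical.

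To see why, push the symmetrization through. With $h=\log f$, the gradient formula of Lemma~\ref{e:grad} gives $\nabla_\sigma \cB h = \int_{S^{d-1}} \left[M_{\sigma',\sigma}\nabla h(\sigma') - \nabla h(\sigma)\right] b(\sigma'\cdot\sigma)\dsigma'$, hence
\[
\Gamma^2_{\cB,\Delta}(h,h)(\sigma) = \int_{S^{d-1}} \left[\tfrac12|\nabla h(\sigma')|^2 + \tfrac12|\nabla h(\sigma)|^2 - \nabla h(\sigma)\cdot M_{\sigma',\sigma}\nabla h(\sigma')\right] b(\sigma'\cdot\sigma)\dsigma'.
\]
Multiplying by $f(\sigma)$, integrating, and symmetrizing yields your double integral with
\[
G(\sigma,\sigma') = \tfrac12\Bigl[\tfrac12|\nabla h(\sigma')|^2 + \tfrac12|\nabla h(\sigma)|^2 - \nabla h(\sigma)\cdot M_{\sigma',\sigma}\nabla h(\sigma')\Bigr]\bigl(f(\sigma)+f(\sigma')\bigr).
\]
This $G$ is a quadratic form in the pair of gradients $(\nabla h(\sigma),\nabla h(\sigma'))$ and vanishes whenever both vanish, whereas the target integrand $(d-2)\,\frac{(f(\sigma')-f(\sigma))^2}{f(\sigma')+f(\sigma)}$ depends only on the \emph{values} of $f$. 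Pick distinct, non-antipodal $\sigma,\sigma'$ and a smooth even $f>0$ with critical points at $\pm\sigma$ and $\pm\sigma'$ but $f(\sigma)\neq f(\sigma')$: then $G(\sigma,\sigma')=0$ while the right-hand side is strictly positive. So the pointwise bound you want to establish is false, and no amount of geodesic Taylor expansion or use of the antipodal symmetry can fix it --- the inequality is genuinely non-local in $\sigma$.

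This is exactly why the paper does not argue slice-by-slice in $(\sigma,\sigma')$. Lemma~\ref{l:reduction} splits \eqref{e:log-sob} into two inequalities of different character. The $\Gamma^2$ criterion \eqref{e:gamma2} \emph{is} pointwise, but only in the single variable $\sigma$: completing the square in the formula above gives $\Gamma^2_{\cB,\Delta}(h,h)(\sigma)\ge \tfrac12|\nabla h(\sigma)|^2\int(1-|M_{\sigma,\sigma'}e|^2)b\dsigma'$, and evaluating this integral produces the factor $\frac{d-2}{d-1}$ --- this is where the Ricci curvature of $S^{d-1}$ and the restriction $d\ge 3$ enter, and your intuition about the source of $d-2$ is correct at this stage. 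The Hardy-type inequality \eqref{e:hardy}, which compares $\iint(F(\sigma')-F(\sigma))^2b\dsigma'\dsigma$ with $\int|\nabla_\sigma F|^2\dsigma$, is \emph{not} pointwise: it is proved by decomposing $F$ into even spherical harmonics, diagonalizing $\cB$ and $\Delta$ simultaneously, and invoking the Legendre-polynomial monotonicity of Proposition~\ref{p:legendre} to bound $\tilde\lambda_{2\ell}/\lambda_{2\ell}$. Both constants $C_K$ and $C_P$ are proportional to $\int(1-(\sigma\cdot\sigma')^2)b\dsigma'$, so the kernel-dependence cancels in $\Lambda_b=2C_K/C_P=d-2$. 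You therefore need a genuinely spectral/Poincaré step in addition to the curvature computation; trying to fold both into a single pointwise inequality on $S^{d-1}\times S^{d-1}$ is the gap in your argument.
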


In dimension $2$, it is possible to construct a collision kernel $b$ such that \eqref{e:log-sob}
holds only for $\Lambda_b = 0$, see the counter-example constructed in \cite[Lemma~8.1]{imbert2024monotonicityfisherinformationboltzmann}. If we restrict ourselves to a smaller class of kernels $b$, then we can prove $\Lambda_b >d$ and in particular $\Lambda_b >0$ in the plane. 
\begin{theorem}[Lower bound for $\Lambda_b$ for subordinated kernels] \label{t:subordinate}
  Let $u_t(c)$ denote the \emph{heat kernel} on the sphere $S^{d-1}$: the solution $U$
  of the heat equation on $S^{d-1}$ with initial datum $f$ is given by
  $U (t,\sigma ) = \int_{S^{d-1}} f(\sigma') u_t (\sigma' \cdot \sigma) \dsigma'$. 
  Assume that $b$ can be written as,
  \[ b (c) = \int_0^{+\infty} u_t (c) \omega (t) \dd t \]
  for some measurable function $\omega \colon (0,\infty) \to [0,\infty)$. 
   Then \eqref{e:log-sob} holds true with $\Lambda_b >d$ .
\end{theorem}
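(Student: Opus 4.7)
The plan is to exploit the subordination representation to reduce the inequality to estimates for the heat semigroup on $S^{d-1}$, where Bakry--\'Emery theory applies. First I observe that both sides of \eqref{e:log-sob} are linear in $b$, and that mass-conservation of the heat equation gives $\cB_{u_t} f = P_t f - f$, where $P_t := e^{t \Delta_{S^{d-1}}}$. Writing $\cB_b = \int_0^\infty (P_t - I) \omega(t) \, \dt$, it then suffices to prove, for every $t > 0$ and every even positive $f$,
\begin{equation*}
\int_{S^{d-1}} \Gamma^2_{\cB_{u_t}, \Delta}(\log f, \log f) \, f \dsigma \ge \Lambda(t) \iint_{S^{d-1} \times S^{d-1}} \frac{(f(\sigma') - f(\sigma))^2}{f(\sigma') + f(\sigma)} u_t(\sigma \cdot \sigma') \dsigma' \dsigma
\end{equation*}
with a uniform lower bound $\Lambda(t) \ge \Lambda_0 > d$.

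Next I compute $\Gamma^2_{\cB_{u_t}, \Delta}$ pointwise. Since $\cB_{u_t} = P_t - I$ commutes with $P_s$, a direct computation yields
\begin{equation*}
2 \, \Gamma^2_{\cB_{u_t}, \Delta}(g,g) = P_t |\nabla g|^2 + |\nabla g|^2 - 2 \, \nabla P_t g \cdot \nabla g.
\end{equation*}
The key analytical tools are the semigroup interpolation identity $P_t|\nabla g|^2 - |\nabla P_t g|^2 = 2 \int_0^t P_s \, \Gamma^2_{\Delta,\Delta}(P_{t-s}g, P_{t-s}g) \, \dd s$ combined with the curvature--dimension condition $CD(d-2, d-1)$ of the sphere, $\Gamma^2_{\Delta,\Delta}(g,g) \ge (d-2)|\nabla g|^2 + (\Delta g)^2/(d-1)$. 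An essential refinement comes from the symmetry $f(-\sigma) = f(\sigma)$: the spectrum of $-\Delta$ on nonconstant even functions is $\{k(k+d-2) : k = 2, 4, \ldots\}$, so the effective Poincaré gap jumps from $d-1$ to $2d$, providing the room needed to push $\Lambda_b$ strictly past the threshold $d$.

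Finally, to convert the resulting gradient-type lower bound into the jump-type right-hand side of \eqref{e:log-sob}, I would use the classical chord inequality $(a-b)(\log a - \log b) \ge 2(a-b)^2/(a+b)$, which dominates the RHS by the entropy-production chord integral $\tfrac12 \iint (f' - f)(\log f' - \log f)\, u_t \dsigma' \dsigma$; the latter is then expressed, via the representation $\cB_{u_t} = P_t - I$ and integration by parts, in terms of quantities controlled by $\int \Gamma^2_{\cB_{u_t},\Delta}(\log f,\log f) f \dsigma$. The main obstacle is precisely this last step: quantitatively transferring between the local (gradient, via $\Delta$) and non-local (jump, via $\cB$) formulations while keeping the constant strictly greater than $d$. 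Both ingredients---the subordination functional calculus (which makes $\cB_b$ commute with $\Delta$) and the even-function spectral improvement---must be used jointly, since either in isolation only recovers the weaker bound $\Lambda_b \ge d-2$ of Theorem~\ref{t:curvature}.
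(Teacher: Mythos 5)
Your structural observation — write $\cB_b = \int_0^\infty (P_t - I)\,\omega(t)\,\dt$ and argue per slice in $t$ — is exactly the starting point of the paper's proof, and the ingredients you name (curvature--dimension on the sphere, the even-function spectral gap $\lambda_2 = 2d$) do appear. However, two serious gaps remain.

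First, the reduction to a per-$t$ log-Sobolev inequality with a \emph{uniform} constant $\Lambda(t)\ge\Lambda_0>d$ is false: the best constant one obtains by these methods for the single subordinator slice $b=u_t$ behaves like $d\,\frac{1-e^{-2\Lambdaloc t}}{1-e^{-2dt}}$, which tends to $d$ as $t\to\infty$. There is no $\Lambda_0>d$ that works for all $t$, and in fact no $\Lambda_0>d$ that works for all subordinated $b$ either (take $\omega$ concentrated at large $t$). What saves the day in the paper is that the inequality is \emph{not} proved by integrating per-$t$ log-Sobolev inequalities. Instead Lemma~\ref{l:reduction} routes both sides through the common intermediate quantity $\int|\nabla_\sigma\sqrt{f}|^2$: one proves \eqref{e:gamma2} with $C_K=\int_0^\infty\omega(t)\frac{1-e^{-2\Lambdaloc t}}{2}\,\dt$ (Proposition~\ref{p:gamma2sub}) and \eqref{e:hardy} with $C_P=\int_0^\infty\omega(t)\frac{1-e^{-2dt}}{d}\,\dt$ (Proposition~\ref{p:hardysub}), each of which integrates a per-$t$ bound that is genuinely uniform over $f$, and then divides. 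Only after dividing the two $\omega$-integrals does one exploit $1-e^{-2\Lambdaloc t}>1-e^{-2dt}$ pointwise to conclude $\Lambda_b=2C_K/C_P>d$. Integrating a family of log-Sobolev inequalities whose constants degenerate to $d$ would at best give $\Lambda_b\ge d$.

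Second, the quantitative engine is missing. For the $\Gamma^2$ side, the number that makes the whole scheme beat $d$ is $\Lambdaloc=d+3-\frac{1}{d-1}>d$ from Theorem~\ref{t:local}, the improved $\Gamma^2$ inequality for the spherical Laplacian on even functions due to Guillen--Silvestre and refined by Ji; the curvature--dimension inequality $CD(d-2,d-1)$ alone does \emph{not} give a constant $>d$ after integrating against $F$, and turning $CD(d-2,d-1)$ plus the even spectral gap into $\Lambdaloc>d$ is itself a nontrivial result you are implicitly assuming. The paper's argument for Proposition~\ref{p:gamma2sub} uses $\Lambdaloc$ indirectly and elegantly: convexity of the Fisher information plus the decay $I(P_tF)\le e^{-2\Lambdaloc t}I(F)$ (a consequence of Theorem~\ref{t:local}) controls $-\langle I'(F),P_tF-F\rangle$ from below, without ever computing $\Gamma^2_{\cB_{u_t},\Delta}$ pointwise via semigroup interpolation. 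Finally, your proposed last step via the chord inequality $(a-b)(\log a-\log b)\ge 2(a-b)^2/(a+b)$ upper-bounds the jump Dirichlet form by the entropy production, but you offer no mechanism to bound the entropy production by $\int\Gamma^2_{\cB,\Delta}(\log f,\log f)f$ with the sharp constant; the paper instead uses $\frac{(a-b)^2}{a+b}\le 2(\sqrt a-\sqrt b)^2$ to go to the Dirichlet energy of $\sqrt f$, and this intermediate quantity is what both $C_K$ and $C_P$ are phrased against. As it stands, your sketch correctly locates the problem but does not solve it.
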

\begin{remark}
  Invariance by rotations of the Laplacian on the sphere 
  is used to get the integral representation  of solutions $U$ of
  the heat equation  mentioned in the statement.
\end{remark}
\begin{remark}
  The proof of this theorem yields a more precise lower bound on $\Lambda_b$ than $\Lambda_b >d$
  as we shall briefly see in the dedicated section below. 
\end{remark}

\subsection{Physically relevant kernels}

In this subsection, we comment on how the previous results can be applied in dimension $2$ and $3$
to address the cases of hard spheres and inverse power laws   discussed above.
In order to do so, we will need an elementary observation related to comparable kernels. 

\paragraph{Comparable kernels.}
When two kernels $b$ and $b_0$ are comparable, then so are the corresponding optimal constants $\Lambda_b$ and $\Lambda_{b_0}$. Here is a precise statement.
\begin{lemma}[Comparing kernels]\label{l:compare}
  Let $b$ and $b_0$ be two collision kernels. Assume that there exist two constants $c_0$ and $C_0$ such that for all $c \in [-1,1]$, we have
  \[ c_0 [ b_0 (c) + b_0 (-c) ] \le b (c) + b (-c) \le C_0 [b_0(c) + b_0 (-c)].\]
  Then $\Lambda_b \ge \frac{c_0}{C_0} \Lambda_{b_0}$.
\end{lemma}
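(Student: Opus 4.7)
The plan is to exploit two features of the log-Sobolev inequality~\eqref{e:log-sob}: first, that it is only tested against even functions $f(-\sigma)=f(\sigma)$; and second, that the operator $\cB$ depends linearly on the kernel $b$, so the same is true of $\Gamma^2_{\cB,\Delta}$.

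\medskip

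First I would show that both sides of~\eqref{e:log-sob} depend on $b$ only through its symmetrized version
\[
  \tilde b(c) := \tfrac12\bigl(b(c)+b(-c)\bigr).
\]
For the right-hand side this is immediate: if $f$ is even, then performing the change of variables $\sigma'\mapsto -\sigma'$ in the inner integral leaves the quotient $(f(\sigma')-f(\sigma))^2/(f(\sigma')+f(\sigma))$ unchanged and replaces $b(\sigma'\cdot\sigma)$ by $b(-\sigma'\cdot\sigma)$; averaging yields $\tilde b$. For the left-hand side, the same change of variables shows that $\cB f(\sigma)$ and $\cB(\Gamma_\Delta(f,g))(\sigma)$ depend only on $\tilde b$ whenever $f$ (hence $\Gamma_\Delta(f,g)$) is even; since $\cB f$ itself is even, every piece of the defining formula
\[
  \Gamma^2_{\cB,\Delta}(f,g) = \tfrac12\bigl(\cB(\Gamma_\Delta(f,g)) - \Gamma_\Delta(\cB f,g) - \Gamma_\Delta(f,\cB g)\bigr)
\]
is expressible with $\tilde b$ in place of $b$ when tested on even functions. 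Thus the comparison hypothesis, which is precisely $c_0 \tilde b_0 \le \tilde b \le C_0 \tilde b_0$, is exactly the right form of assumption.

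\medskip

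Next I would decompose $\tilde b = c_0 \tilde b_0 + b_1$ with $b_1 := \tilde b - c_0 \tilde b_0 \ge 0$. Since $\cB$ is linear in its kernel, we have $\cB_{\tilde b} = c_0 \cB_{\tilde b_0} + \cB_{b_1}$, and the bilinear formula above gives
\[
  \Gamma^2_{\cB_{\tilde b},\Delta} = c_0\,\Gamma^2_{\cB_{\tilde b_0},\Delta} + \Gamma^2_{\cB_{b_1},\Delta}.
\]
Because $b_1$ is itself an admissible collision kernel, the associated constant $\Lambda_{b_1}$ is non-negative by definition, so
\[
  \int_{S^{d-1}} \Gamma^2_{\cB_{b_1},\Delta}(\log f,\log f)\, f\, \dsigma \ge 0.
\]
Applying the log-Sobolev inequality for $b_0$ to the first term then yields
\[
  \int_{S^{d-1}} \Gamma^2_{\cB_{\tilde b},\Delta}(\log f,\log f)\, f\, \dsigma \;\ge\; c_0\,\Lambda_{b_0}\iint_{S^{d-1}\times S^{d-1}} \frac{(f(\sigma')-f(\sigma))^2}{f(\sigma')+f(\sigma)}\,\tilde b_0(\sigma'\cdot\sigma)\,\dsigma'\dsigma.
\]

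\medskip

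Finally, the upper bound $\tilde b \le C_0 \tilde b_0$ together with non-negativity of the integrand gives
\[
  \iint \frac{(f(\sigma')-f(\sigma))^2}{f(\sigma')+f(\sigma)}\,\tilde b_0 \,\dsigma'\dsigma \;\ge\; \frac{1}{C_0}\iint \frac{(f(\sigma')-f(\sigma))^2}{f(\sigma')+f(\sigma)}\,\tilde b \,\dsigma'\dsigma,
\]
which, after chaining the two inequalities and returning to $b$ (using that the right-hand side of~\eqref{e:log-sob} depends only on $\tilde b$), yields the desired bound $\Lambda_b \ge (c_0/C_0)\Lambda_{b_0}$.

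\medskip

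The only slightly delicate point is the first step: one has to check carefully that the symmetrization in $\sigma'$ really reduces both sides of~\eqref{e:log-sob} to expressions involving only $\tilde b$, which requires the observation that $\cB$ preserves evenness. Once that is in place, the rest of the argument is pure linearity plus positivity.
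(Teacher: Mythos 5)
Your overall strategy is sound and, as the paper gives only the statement without a proof, there is nothing to compare to directly. Your reduction to the symmetrized kernel $\tilde b(c)=\tfrac12(b(c)+b(-c))$ is correct and worth spelling out (the hypothesis of the lemma is literally $c_0\tilde b_0\le\tilde b\le C_0\tilde b_0$), and so is the observation that both sides of \eqref{e:log-sob} are linear in the kernel, which lets you chain the lower bound through the decomposition $\tilde b=c_0\tilde b_0+b_1$ and then use $\tilde b\le C_0\tilde b_0$ on the right-hand side. The scheme correctly gives $\Lambda_b=\Lambda_{\tilde b}\ge\frac{c_0}{C_0}\Lambda_{b_0}$.

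The one step that deserves a cleaner justification is the claim that
\[
\int_{S^{d-1}}\Gamma^2_{\cB_{b_1},\Delta}(\log f,\log f)\,f\,\dsigma\ge 0.
\]
You argue this by saying $\Lambda_{b_1}\ge 0$ ``by definition'', but that is circular: in the paper, $\Lambda_b$ is defined as the largest constant $\ge 0$ for which \eqref{e:log-sob} holds, so asserting $\Lambda_{b_1}\ge 0$ already presupposes that the left-hand side is non-negative for $b_1$, which is exactly what you need. The honest way to close this gap is to invoke the \emph{pointwise} non-negativity of $\Gamma^2_{\cB,\Delta}$, which is visible from the computation in the proof of the $\Gamma^2$-criterion in Section~\ref{s:curvature}: using Lemma~\ref{e:grad} one gets
\[
\Gamma^2_{\cB,\Delta}(F,F)(\sigma)=\frac12\int_{S^{d-1}}\Bigl\{\,|\nabla_\sigma F(\sigma')-M_{\sigma,\sigma'}\nabla_\sigma F(\sigma)|^2 + |\nabla_\sigma F(\sigma)|^2-|M_{\sigma,\sigma'}\nabla_\sigma F(\sigma)|^2\,\Bigr\}\,b(\sigma'\cdot\sigma)\,\dsigma',
\]
and both terms in braces are $\ge 0$ because $M_{\sigma,\sigma'}$ is a contraction. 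This pointwise positivity holds for any non-negative kernel and in every dimension $d\ge 2$, so the monotonicity of the $\Gamma^2$-integral in the kernel — which is really what your decomposition is exploiting — holds with no further assumptions. You should also record in passing that $b_1$ satisfies Condition~\eqref{e:levy} (immediate since $0\le b_1\le\tilde b$). With these two small repairs the proof is complete.
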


\paragraph{Constant kernels.} Constant kernels make part of subordinate kernels. Indeed, they correspond to constant weight functions $\omega$. In particular, $\Lambda_b >d$ for
constant kernels.

\paragraph{Hard spheres.}
We start with hard sphere interaction. In this case, we recall that $\gamma =1$ and $\bhs (c) = 2^{d-3} \sqrt{1-c}^{3-d}$.

In the planar case, we thus have $\bhs (c) = \frac12 \sqrt{1-c}$.
In particular, $\bhs(c) + \bhs(-c) \in [\frac1{\sqrt{2}},\frac12]$. We thus can compare $\bhs$ (in the sense of Lemma~\ref{l:compare}) to $b_0 \equiv 1/2$, for which we know that $\Lambda_{b_0} \ge 2$ (see the first paragraph of this subsection). We conclude that $\Lambda_\bhs \ge 2 \frac{\sqrt{2}}2 = \sqrt{2}$. In particular $2 \sqrt{\Lambda_\bhs} \ge 1 = \gamma$.   

In dimension $3$, the collision kernel $\bhs$ is constant thus $\Lambda_\bhs \ge 3$ and $2 \sqrt{\Lambda_{\bhs}} \ge 1$. 

\paragraph{Inverse power laws in dimension $3$.} We know from Theorem~\ref{t:curvature} that for any kernel satisfying Condition~\eqref{e:levy}, we have
$\Lambda_b  \ge 1$ in dimension $3$. This is true in particular for kernels corresponding to inverse power law potentials. This result tells us that
we can address readily the case $|\gamma| \le 2$ or equivalently $s \le 3/4$ (or $q \ge 7/3$). Recall that very soft potentials correspond to
$\gamma +2s \le 0$ \textit{i.e.}  $s \ge 1/2$. 

The remaining very soft potential cases are adressed thanks to Lemma~\ref{l:compare} and some numerical computations by L.~Silvestre \cite{numerics}.
More precisely, inverse power law collision kernels are numerically compared with subordinate ones associated with some explicit weight functions.

\subsection{Global well-posedness for very soft potentials}

An important consequence of the monotonicity of the Fisher information along the flow of the Boltzmann equation is global well-posedness for
very soft potentials. If global well-posedness has been known for a while for  hard ($\gamma \ge 0$) and moderately soft  ($-2s \le \gamma  \ge 0$) potentials -- see for instance \cite{he2012},
it was a well-known open problem in the case of very soft potentials. 
\begin{theorem}[{\cite[Theorem~1.6]{imbert2024monotonicityfisherinformationboltzmann}}]
Assume $d=3$ and consider a collision kernel $B(r,c)$  of the form $r^\gamma b (c)$ with $b \simeq (1-c^2)^{-\frac{d-1+2s}2}$ for some $\gamma \in (-3,0]$ and $s \in (0,1)$. 
There exists $q >1$ such that for all $\fin (v)$ satisfying $(1+ |v|^2)^q \fin (v) \in L^\infty$, the Boltzmann equation~\eqref{e:boltz} has a global
smooth solution with initial datum $\fin$.
\end{theorem}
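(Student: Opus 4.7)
The plan is to combine the Fisher-information monotonicity just established (Theorem~\ref{t:criterion}), which the previous subsections show applies to the full range $\gamma \in (-3,0]$, $s \in (0,1)$ in dimension three, with the existing local well-posedness and conditional-regularity theory for the non-cutoff Boltzmann equation. The missing ingredient for global existence in the very soft regime has always been a time-uniform a priori estimate strong enough to prevent blow-up, and the Fisher bound will supply precisely this.

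First I would regularize $\fin$ into a sequence $f^\eps_{\mathrm{in}}$ that is smooth, non-negative, carries the polynomial moment $(1+|v|^2)^q f^\eps_{\mathrm{in}} \in L^\infty$ uniformly in $\eps$, and has finite Fisher information $I(f^\eps_{\mathrm{in}}) \le C_0$. Classical local existence for the non-cutoff equation (a fixed-point argument in a weighted Sobolev space) then provides a smooth local solution $f^\eps$ on a maximal interval $[0,T^\eps)$, and moment propagation estimates for very soft potentials, together with the conservation of mass, momentum and energy, keep the polynomial moments bounded on this interval.

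Applying Theorem~\ref{t:criterion} along each $f^\eps$ yields $I(f^\eps(t,\cdot)) \le C_0$ uniformly in $t\in[0,T^\eps)$ and in $\eps$. Writing $I(f) = 4\int |\grad \sqrt{f}|^2 \dv$ and using the conserved $L^1$ norm, $\sqrt{f^\eps(t,\cdot)}$ is uniformly bounded in $H^1(\R^3)$, and the three-dimensional Sobolev embedding $H^1 \hookrightarrow L^6$ gives a uniform bound $\|f^\eps(t,\cdot)\|_{L^3(\R^3)} \le C_1$. Interpolating this $L^3$ bound with the propagated polynomial moments produces the weighted $L^p$ control needed to feed the conditional-regularity programme of L.~Silvestre and coauthors for non-cutoff Boltzmann: a smooth local solution can be continued as long as suitable weighted norms stay bounded, which forces $T^\eps = +\infty$. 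A compactness argument lets us then pass to the limit $\eps\to 0$ and recover a global smooth solution with initial datum $\fin$.

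The main obstacle will be this last matching step. Fisher alone only delivers $\sqrt{f}\in H^1_v$, hence $f\in L^3_v$, which is not strong enough on its own when $\gamma$ is close to $-3$. One must interpolate against a sufficiently large polynomial moment, and this is exactly where the threshold $q>1$ in the statement enters: $q$ must be chosen large enough, depending on $\gamma$ and $s$, so that the interpolated bound falls into the regime covered by the continuation criterion. Verifying that such a finite $q$ exists uniformly across the whole range $\gamma\in(-3,0]$ and $s\in(0,1)$ is the technical heart of the argument, and it is the point at which all previous approaches failed, since no prior a priori estimate was strong enough to substitute for the Fisher bound now available.
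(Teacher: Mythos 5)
This review note does not actually prove the statement: immediately after stating it, the text says that "the reader that is interested in global well-posedness is referred to \cite{imbert2024monotonicityfisherinformationboltzmann} for further references and more details." There is therefore no in-text proof to compare your attempt against, and any evaluation can only be against the general strategy known to be used in that reference.

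With that caveat, your outline is a reasonable rendering of the expected plan: regularize the datum, run local existence, propagate the Fisher bound via Theorem~\ref{t:criterion} together with the lower bounds on $\Lambda_b$ established in the note, convert $I(f)\le C_0$ into a uniform $H^1$ bound on $\sqrt{f}$ and hence an $L^3$ bound on $f$, combine with moment propagation, invoke a continuation criterion, and pass to the limit. You also correctly locate the technical core in the matching of the Fisher-derived estimate with a continuation criterion. The difficulty is that your sketch stops precisely there, and the one intermediate step you do assert is not correct as written: interpolating the $L^1$ moment bound $\int(1+|v|^2)^q f\dv\le C$ against the $L^3$ bound does not raise the integrability exponent above~$3$, it only yields weighted $L^p$ bounds for $p\in[1,3]$. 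This by itself does not tame $\int f(w)\,|v-w|^{\gamma}\dw$ uniformly when $\gamma$ is close to $-3$, since that convolution needs $f\in L^p$ with $p>3/(3+\gamma)$, which exceeds~$3$ as soon as $\gamma<-2$. Closing the argument requires identifying the specific continuation criterion to be used, checking exactly which weighted quantities it asks for, and verifying that the Fisher bound together with a suitably high moment $q$ falls inside its hypotheses across the entire range $\gamma\in(-3,0]$, $s\in(0,1)$. None of this is carried out in the proposal, so as it stands it is a plan for a proof rather than a proof, and the one quantitative claim it makes about the interpolation step is the one most likely to fail in the very soft regime the theorem is actually about.
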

Since this note focuses on the monotonicity of the Fisher information, the  reader that is interested in global well-posedness is referred to \cite{imbert2024monotonicityfisherinformationboltzmann} for further references and more details. 

\subsection{The Fisher information for kinetic equations in the literature}

The Fisher information was first considered in the study of kinetic equations by H.~P.~McKean \cite{mckean1966} in the study of Kac's model for Maxwell molecules. 
Then G.~Toscani \cite{toscani1992} proved that the Fisher information decreases along the flow of the space-homogeneous Boltzmann equation in space dimension $2$ and again for Maxwell molecules.
 C.~Villani \cite{villani1998boltzmann} extended this first result about Boltzmann to any space dimension (but still with Maxwell molecules). We already mentioned the
work by N.~Guillen and L.~Silvestre \cite{landaufisher2023} about the space-homogeneous Landau equation for a large class of potentials. Notice that C.~Villani recently wrote a  very complete review paper \cite{villani-crete}. 

For previous known results concerned with either Log-Sobolev inequality, or global wellposedness of the space-homogeneous Boltzmann equation, the reader is referred to
\cite{imbert2024monotonicityfisherinformationboltzmann}. 

\subsection{Organization of the note and notation}

\paragraph{Organization.} The remainder of this note is organized as follows. Section~\ref{s:criterion} is dedicated to the proof of Theorem~\ref{t:criterion}.
This result consists in a criterion ensuring that the Fisher information decreases along the flow of the space-homogeneous Boltzmann equation. The two other sections
contain estimates of the best constant $\Lambda_b$ appearing in the log-Sobolev inequality. In Section~\ref{s:curvature}, it is assumed that
dimension is larger than $3$ and that collision kernels satisfy Condition~\eqref{e:levy}. In Section~\ref{s:subord}, a specific class of collision kernels
are considered (related to subordinate Brownian motions on the sphere) and a lower bound on $\Lambda_b$ is derived in any dimension larger than $2$. 

\paragraph{Notation.} We work with the Euclidian space $\R^d$ with $d \ge 2$. For $v,w \in \R^d$, $v \cdot w$ denotes the scalar product and $|v|$ denotes the Euclidian norm. 
The unit sphere is denoted by $S^{d-1}$ and $\Delta$ denotes the Laplacian operator on this sphere.

For two functions $f(v)$ and $g(v)$, $f \otimes g$ denotes the function $f(v)g(w)$
defined on $\R^d \times \R^d$. At some point, we will use polar coordinates for a variable in $\R^d$. However, using polar coordinates or not,  the domain integration $\R^d \times \R^d$ or $\R^d \times (0,+\infty) \times S^{d-1}$ is simply written $\R^{2d}$.

\section{Linking the time derivative of Fisher with the log-Sobolev inequality}
\label{s:criterion}

In this section, we review the proof of the criterion contained in Theorem~\ref{t:criterion} for the monotonicity in time of the Fisher information of any solution of the
space-homogeneous Boltzmann equation. We will only skip a few computations, so that this section is almost self-contained. 

\subsection{Tensorization}

For collision kernels $B$ of the form $\alpha (r/2) b(\sigma \cdot \sigma')$, the collision operator $Q$ can be written
\begin{equation}
  \label{e:QcQ}
  Q (f,f) =  \int_{\R^d} \cQ (f \otimes f) \dw
\end{equation}
where
the linear Boltzmann operator $\cQ$ is defined for $F \colon \R^d \times \R^d \to \R$ by 
\[ \cQ F(v,w) =  \alpha (|v-w|/2) \int_{\sigma'} \bigg( F(v',w') - F(v,w) \bigg) b (\sigma' \cdot \sigma) \dsigma', \]
and velocities $v',w'$ and $\sigma$ are still given by \eqref{e:vwcos}. 

One can then consider the linear equation,
\begin{equation}
  \label{e:linear}
  \partial_t F = \cQ F, \quad t>0, (v,w) \in \R^d \times \R^d.
\end{equation}

A remarkable observation made by N.~Guillen and L.~Silvestre is that,
given a collision kernel $b$, 
if the Fisher information of solutions of the \emph{linear} equation \eqref{e:linear}
decreases along time, then so does the Fisher information of solutions
of the non-linear equation~\eqref{e:boltz}. One way to prove such a result
is to relate Gâteaux derivatives of  $f \mapsto I(f)$ and $F \mapsto I(F)$ at $F=f\otimes f$. 
\begin{lemma}[Gâteaux derivative of the Fisher information] \label{l:tensor}
  Let $f \colon \R^d \to \R$ be non-negative and smooth.
  Then for $F = f \otimes f$,
  \[ \langle I'(f), Q(f,f) \rangle = \frac12\langle I'(F), \cQ F \rangle .\]
\end{lemma}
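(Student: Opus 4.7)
My plan is to reduce the identity to a short calculus-of-variations computation, exploiting two elementary facts: the additive behaviour of the Fisher integrand on tensor products, and the already-stated tensorization \eqref{e:QcQ}.

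First I would compute the $L^2$ representative of the Gâteaux derivative of $I$. Writing $I(f) = \int |\nabla f|^2/f \dv$ and differentiating formally (or integrating by parts, modulo boundary terms that can be justified afterwards for the smooth, rapidly decaying densities we consider),
\[ I'(f)(v) = -|\nabla \log f(v)|^2 - 2\, \Delta \log f(v),\]
so that $\langle I'(f), \phi\rangle = \int I'(f)\, \phi \dv$ for a test perturbation $\phi$. Exactly the same computation on $\R^{2d}$ gives, for any positive $F$,
\[ I'(F)(v,w) = -|\nabla_{v,w}\log F|^2 - 2\,\Delta_{v,w}\log F.\]

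The second step is the key algebraic observation. When $F = f\otimes f$, one has $\log F(v,w) = \log f(v) + \log f(w)$, so both the squared gradient and the Laplacian split additively: $|\nabla_{v,w}\log F|^2 = |\nabla\log f(v)|^2 + |\nabla\log f(w)|^2$ and $\Delta_{v,w}\log F = \Delta\log f(v) + \Delta\log f(w)$. Substituting into the formula above yields
\[ I'(F)(v,w) = I'(f)(v) + I'(f)(w).\]

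Finally I would plug $\Phi = \cQ F$ into the pairing. The kernel $B$ is symmetric under $(v,w)\mapsto(w,v)$ (this swap permutes $(v',w')$ and reverses $\sigma$, while the integration in $\sigma'$ on $S^{d-1}$ is invariant), so $\cQ F$ is symmetric in $v,w$ whenever $F$ is. Using this symmetry and then the tensorization identity \eqref{e:QcQ}, $\int_{\R^d} \cQ F(v,w)\dw = Q(f,f)(v)$, gives
\[ \langle I'(F), \cQ F\rangle = 2\int_{\R^{2d}} I'(f)(v)\, \cQ F(v,w)\dv\dw = 2\int_{\R^d} I'(f)(v)\, Q(f,f)(v)\dv = 2\,\langle I'(f), Q(f,f)\rangle,\]
which is exactly the claimed identity after dividing by two. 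There is no real obstacle here; the only thing to be careful about is justifying the integration by parts used to identify $I'(f)$ with its $L^2$ representative (equivalently, checking that no boundary term appears at infinity), which holds under the regularity and decay assumed on $f$.
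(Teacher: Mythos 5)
Your proof is correct and follows essentially the same route as the paper's: both exploit the additivity of $\log F$ under $F=f\otimes f$, the $(v,w)$-symmetry of $\cQ F$, and the tensorization identity \eqref{e:QcQ}. The only cosmetic difference is that you integrate by parts once more to get the pointwise representative $I'(f)=-|\nabla\log f|^2-2\Delta\log f$ and hence the clean identity $I'(F)(v,w)=I'(f)(v)+I'(f)(w)$, whereas the paper keeps the Gâteaux pairing in weak form, writing $\langle I'(F),\cQ F\rangle = 4\int\nabla_v\log F\cdot\nabla_v\cQ F - 2\int|\nabla_v\log F|^2\,\cQ F$ after symmetrization and then substituting $\nabla_v\log F=\nabla_v\log f$ — which avoids the extra integration by parts at the cost of not making the additivity of $I'$ explicit.
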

\begin{proof}
  We start from the right hand side of the equality contained in the statement.
  \begin{align}
 \nonumber    \langle I'(F), \cQ F \rangle  = & 2 \int_{\R^d \times \R^d} \frac{\nabla_v F \cdot \nabla_v \cQ F}{F} +2 \int_{\R^d \times \R^d} \frac{\nabla_w F \cdot \nabla_w \cQ F}{F}  \\
 \nonumber                                    & - \int_{\R^d \times \R^d} \frac{|\nabla_v F|^2}{F} \cQ F - \int_{\R^d \times \R^d} \frac{|\nabla_w F|^2}{F^2} \cQ F .\\
    \intertext{We now use that $F(v,w) = F(w,v)$ in order to get,}
   \label{e:I'(F)} \langle I'(F), \cQ F \rangle      = & 4 \int_{\R^d \times \R^d} \nabla_v \log F \cdot \nabla_v \cQ F - 2 \int_{\R^d \times \R^d} |\nabla_v \log F|^2 \cQ F \\
   \nonumber  = & 4 \int_{\R^d} \nabla_v \log f  \cdot \left\{ \int_{\R^d} \nabla_v \cQ F \dw \right\}- 2 \int_{\R^d} |\nabla_v  \log f|^2 \left\{ \int_{\R^d} \cQ F \dw\right\} \\
         \nonumber    = & 4 \int_{\R^d} \nabla_v \log f  \cdot \nabla_v Q (f,f)- 2 \int_{\R^d} |\nabla_v \log f|^2 Q (f,f) 
  \end{align}
  where we used \eqref{e:QcQ} to get the last line. We now recognize $2 \langle I'(f), Q(f,f) \rangle$. 
\end{proof}

\subsection{Polar coordinates}

In order to study the Gâteaux derivative of $F$ at $F = f \otimes f$, it is better to consider
\[ z = \frac{v+w}2, \quad r = \frac{|v-w|}2, \quad \sigma = \frac{v-w}{|v-w|}.\]
With such a change of variables in hand, we notice that
\[ \cQ F (z,r,\sigma) = \alpha (r) \int_{S^{d-1}} \bigg( F(z,r,\sigma') - F(z,r,\sigma) \bigg) b (\sigma' \cdot \sigma) \dsigma'.\]
We write next,
\begin{equation}\label{e:fisher-dec}
  I (F) = I_z (F) + I_r (F) + I_\sigma (F) 
\end{equation}
with
\[ \left\{\begin{aligned}
  I_z (F) &= \int_{\R^{2d}} \frac{|\nabla_z F|^2}{F} r^{d-1} \dz \dr \dsigma,  \\
  I_r (F) &= \int_{\R^{2d}} \frac{|\nabla_r F|^2}{F} r^{d-1} \dz \dr \dsigma,  \\
  I_\sigma (F) &= \int_{\R^{2d}} \frac{|\nabla_\sigma F|^2}{r^2F} r^{d-1} \dz \dr \dsigma.  
\end{aligned}\right. \]
Lengthy but straightforward computations \cite[Lemmas~3.1 and 3.2]{imbert2024monotonicityfisherinformationboltzmann} yield,
\begin{align}
\label{e:I'zF}  \langle I_z (F), \cQ F \rangle &\le 0 ,\\
\label{e:I'rF}  \langle I_r (F), \cQ F \rangle &= \frac12 \iint_{S^{d-1} \times \R^{2d}} \frac{(\alpha'(r))^2}{\alpha(r)} \frac{(F(z,r,\sigma') - F(z,r,\sigma))^2}{F(z,r,\sigma')+ F(z,r,\sigma)} r^{d-1} \dsigma' \dz \dr \dsigma.
\end{align}
The next lemma contains the key computation.
\begin{lemma}[Gâteaux derivative of the Fisher information in $\sigma$] \label{l:gateaux-sigma}
For smooth functions $F$, we have
\[ \langle I_\sigma' (F), \cQ F \rangle = - 2  \int_{\R^{2d}}  \frac{\alpha(r)}{r^2} \Gamma^2_{\cB,\Delta} (\log F,\log F) F \; r^{d-1} \dz \dr \dsigma .\]
\end{lemma}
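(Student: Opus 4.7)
The plan is to compute the Gâteaux derivative of $I_\sigma$ directly, substitute the linear Boltzmann operator in polar coordinates, and reduce the claim to a single functional identity on the sphere. First I would write the first variation of $I_\sigma$ at $F$ in a general direction $G$,
$$\langle I_\sigma'(F), G\rangle = \int_{\R^{2d}} \frac{1}{r^2}\left(\frac{2\nabla_\sigma F \cdot \nabla_\sigma G}{F} - \frac{|\nabla_\sigma F|^2}{F^2}\, G \right) r^{d-1} \dz \dr \dsigma,$$
then substitute $G = \cQ F = \alpha(r)\,\cB F$, observing that $\cB$ acts only on the $\sigma$-variable while $\alpha(r)/r^2$ is constant in $\sigma$ and pulls out of the $\sigma$-integral. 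Setting $h := \log F$ so that $\nabla_\sigma F = F\nabla_\sigma h$ and $|\nabla_\sigma F|^2/F^2 = \Gamma_\Delta(h,h)$, the calculation reduces to proving the identity, pointwise in $(z,r)$,
$$\int_{S^{d-1}} \bigl( 2\,\Gamma_\Delta(h, \cB F) - \Gamma_\Delta(h, h)\, \cB F \bigr) \dsigma = -2 \int_{S^{d-1}} \Gamma^2_{\cB,\Delta}(h, h)\, F \dsigma$$
for every smooth positive function $F = e^h$ on the sphere.

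To prove this identity I would mimic Bakry and \'Emery's iterated carré-du-champ computation, adapted to the mixed situation where the driving non-local operator $\cB$ differs from the diffusion $\Delta$ that defines the Fisher information. Expanding $\Gamma^2_{\cB,\Delta}$ on the right-hand side gives
$$-2\int_{S^{d-1}} \Gamma^2_{\cB,\Delta}(h,h)\,F \dsigma = -\int_{S^{d-1}} \cB(\Gamma_\Delta(h,h))\,F \dsigma + 2\int_{S^{d-1}} \Gamma_\Delta(\cB h, h)\,F \dsigma.$$
The self-adjointness of $\cB$ on $L^2(S^{d-1})$, which follows from the symmetry of its kernel $b(\sigma\cdot\sigma')$, transfers $\cB$ from $\Gamma_\Delta(h,h)$ onto $F$ in the first term and makes it cancel exactly against the $\Gamma_\Delta(h,h)\,\cB F$ term on the left. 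What remains is the cross identity
$$\int_{S^{d-1}} \Gamma_\Delta(h, \cB F) \dsigma = \int_{S^{d-1}} \Gamma_\Delta(\cB h, h)\,F \dsigma.$$
On the left I would integrate by parts on the sphere and then use self-adjointness of $\cB$ to rewrite it as $-\int_{S^{d-1}} \cB\Delta h\,F \dsigma$. On the right, the chain rule $\nabla_\sigma F = F\nabla_\sigma h$ turns $\Gamma_\Delta(\cB h, h)\,F$ into $\nabla_\sigma \cB h \cdot \nabla_\sigma F$, which integrates by parts into $-\int_{S^{d-1}} \Delta\cB h\,F \dsigma$. The two sides coincide because $\cB$ and $\Delta$ commute: both are invariant under the action of $SO(d)$ on $S^{d-1}$, so by Schur's lemma they are diagonalised simultaneously by spherical harmonics.

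The main obstacle is the bookkeeping forced by the fact that $\cB$ is not a derivation: the usual Bakry-\'Emery chain of equalities has to be rerouted, at each step, either through spherical integration by parts (to exploit $\Delta$) or through self-adjointness of $\cB$ (to move it off an argument), with the chain rule available only for $\Delta$ and never for $\cB$. The indispensable algebraic input is the commutation $[\cB,\Delta]=0$; although immediate from rotation invariance, it is precisely what causes the integrand to reassemble into $\Gamma^2_{\cB,\Delta}(h,h)\,F$ rather than into a strictly larger collection of cross-terms unrelated to any clean curvature-dimension quantity.
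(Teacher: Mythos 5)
Your proposal is correct and is essentially the same argument as the paper's: the same ingredients (self-adjointness of $\cB$, commutativity $[\cB,\Delta]=0$, integration by parts on the sphere, and the chain rule $\nabla_\sigma F = F\nabla_\sigma\log F$) are applied in a slightly rearranged order. The only cosmetic difference is that you first reduce to a pointwise-in-$(z,r)$ identity on $S^{d-1}$ and then cancel the common $\Gamma_\Delta(h,h)\cB F$ term from both sides, whereas the paper carries the $r^{d-3}$ weight along and transforms the Gâteaux derivative step by step until it reassembles into $\Gamma^2_{\cB,\Delta}$; the manipulations are otherwise identical.
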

\begin{proof}
  Remember that $\Gamma_\Delta ( F, G) = \nabla_\sigma F \cdot \nabla_\sigma G$. In particular, we can compute the Gâteaux derivative of
  $I_\sigma(F)$ after writing $I_\sigma (F) = \int_{\R^{2d}} \Gamma_\Delta (\log F,\log F) F r^{d-3} \dz \dr \dsigma$. This yields,
\begin{align*}
  &\langle I_\sigma' (F), \cQ F \rangle \\
  & = 2 \int_{\R^{2d}} \Gamma_\Delta \left(\log F,\frac{ \cQ F}{F} \right) F r^{d-3} \dz \dr \dsigma + \int_{\R^{2d}} \Gamma_\Delta (\log F,\log F) \cQ F r^{d-3} \dz \dr \dsigma \\
  & = 2 \int_{\R^{2d}}  \nabla_\sigma  F \cdot \nabla_\sigma \left( \frac{ \cQ F}{F} \right)  r^{d-3} \dz \dr \dsigma
    + \int_{\R^{2d}}  \Gamma_\Delta (\log F,\log F)  \cQ F r^{d-3} \dz \dr \dsigma. \\
  \intertext{We now write $\nabla_\sigma F \cdot \nabla_\sigma (\cQ F/F)$ as $\nabla_\sigma \log F \cdot \nabla_\sigma  \cQ F - \Gamma_\Delta (\log F,\log F) \cQ F$,}
  & = 2 \int_{\R^{2d}}  \nabla_\sigma  \log F \cdot \nabla_\sigma \cQ F \; r^{d-3} \dz \dr \dsigma
    - \int_{\R^{2d}}  \Gamma_\Delta (\log F,\log F)  \cQ F \;r^{d-3} \dz \dr \dsigma.
    \intertext{We now use that $\cQ$ and $\Delta_\sigma$ commute \cite[Lemma~2.4]{imbert2024monotonicityfisherinformationboltzmann} for the first term and that
    $\cQ$ is self-adjoint in $L^2$ for the second term,}
  & = - 2 \int_{\R^{2d}}    \log F \cQ \Delta  F \; r^{d-3} \dz \dr \dsigma
    - \int_{\R^{2d}}  \cQ \Gamma_\Delta (\log F,\log F)  F \;r^{d-3} \dz \dr \dsigma.
    \intertext{We  use again that $\cQ$ is self-adjoint in $L^2$ and integrate by parts the first term  to finally get,}
  & =  2 \int_{\R^{2d}} \nabla_\sigma \cQ   \log F \cdot \nabla_\sigma  F \; r^{d-3} \dz \dr \dsigma
    - \int_{\R^{2d}}  \cQ \Gamma_\Delta (\log F,\log F)  F \;r^{d-3} \dz \dr \dsigma.
\end{align*}
We conclude by writing $\nabla_\sigma F = F \nabla_\sigma \log F$ and recognize  $\Gamma_\Delta(\cQ \log F, \log F) F$ in the first term. 
\end{proof}
\begin{proof}[Sketch of proof of Theorem~\ref{t:criterion}]
  For any solution $f$ of \eqref{e:boltz}, consider the solution $F$ of \eqref{e:linear} such that $F= f\otimes f$ at initial time.
  Thanks to \eqref{e:fisher-dec}, \eqref{e:I'zF}, \eqref{e:I'rF} and Lemma~\ref{l:gateaux-sigma}, we have,
  \begin{align*}
    \langle I'(F),\cQ F \rangle \le
    &  \frac12 \iint_{S^{d-1} \times \R^{2d}} \frac{(\alpha'(r))^2}{\alpha(r)} \frac{(F(z,r,\sigma') - F(z,r,\sigma))^2}{F(z,r,\sigma')+ F(z,r,\sigma)} r^{d-1} \dsigma' \dz \dr \dsigma \\
    & - 2  \int_{\R^{2d}}  \frac{\alpha(r)}{r^2}\Gamma^2_{\cB,\Delta} (\log F,\log F) F \; r^{d-1} \dz \dr \dsigma .
  \end{align*}
  The condition imposed on $\alpha$ implies that $\langle I'(F),\cQ F \rangle  \le 0$. Now we conclude that the Fisher information of $f$ decreases along time
  thanks to Lemma~\ref{l:tensor}. 
\end{proof}

\section{Log-Sobolev inequality from curvature}
\label{s:curvature}

In this section and the following one, we derive lower bounds for the constant $\Lambda_b$ appearing in the log-Sobolev inequality \eqref{e:log-sob}.
The lower bound that we will get in the present section will be obtained by using the curvature of the sphere $S^{d-1}$. This lower bound is positive
only for $d \ge 3$. To get a result that also applies in the planar case, we will consider collision kernels coming from subordinate Brownian motions
on the sphere. In both cases, we will obtain a lower bound on $\Lambda_b$ by establishing two intermediate functional inequalities. 
\begin{lemma}[Reduction] \label{l:reduction}
  The Log-Sobolev inequality \eqref{e:log-sob}  holds  with $\Lambda_b =  \frac{2C_K}{C_P}$ as soon as the two following ones hold
for all smooth $F \colon S^{d-1} \to (0,+\infty)$ such that $F(-\sigma)=  F(\sigma)$,
\begin{eqnarray}
\label{e:gamma2}
  C_K \int_{S^{d-1}} |\nabla_\sigma \log F|^2 F \dsigma \le \int_{S^{d-1}} \Gamma^2_{\cB,\Delta} (\log F,\log F) F \dsigma, \\
    \label{e:hardy}
  \iint_{S^{d-1} \times S^{d-1}} (F(\sigma')-F(\sigma))^2 b (\sigma' \cdot \sigma) \dsigma \dsigma'  \le C_P \int_{S^{d-1}} |\nabla_\sigma F|^2 \dsigma.
\end{eqnarray}
\end{lemma}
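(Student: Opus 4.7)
The idea is to chain the two hypotheses \eqref{e:gamma2} and \eqref{e:hardy} by applying the first to $f$ itself and the second to $F=\sqrt{f}$, bridging them with a pointwise inequality that converts the $L^2$-type oscillation $(\sqrt{f(\sigma')}-\sqrt{f(\sigma)})^2$ on the sphere into the weighted oscillation $(f(\sigma')-f(\sigma))^2/(f(\sigma')+f(\sigma))$ appearing on the right-hand side of \eqref{e:log-sob}.

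\emph{Step 1: the pointwise inequality.} For any $a,b \ge 0$ with $(a,b)\neq (0,0)$, one has $(\sqrt{a}+\sqrt{b})^2 \le 2(a+b)$, equivalently
\[ (\sqrt{a}-\sqrt{b})^2 = \frac{(a-b)^2}{(\sqrt{a}+\sqrt{b})^2} \ge \frac{(a-b)^2}{2(a+b)}. \]
This is elementary and will give the factor $2$ in the final constant.

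\emph{Step 2: apply \eqref{e:hardy} to $F=\sqrt{f}$.} Since $f$ is positive and symmetric under $\sigma \mapsto -\sigma$, so is $\sqrt{f}$. A direct computation gives $|\nabla_\sigma \sqrt{f}|^2 = \tfrac14 |\nabla_\sigma \log f|^2 f$, hence
\[ \iint_{S^{d-1}\times S^{d-1}} (\sqrt{f(\sigma')}-\sqrt{f(\sigma)})^2 b(\sigma'\cdot\sigma)\dsigma'\dsigma \le \frac{C_P}{4}\int_{S^{d-1}} |\nabla_\sigma \log f|^2 f \dsigma. \]
Combining this with the pointwise bound of Step~1 yields
\[ \iint_{S^{d-1}\times S^{d-1}} \frac{(f(\sigma')-f(\sigma))^2}{f(\sigma')+f(\sigma)} b(\sigma'\cdot\sigma)\dsigma'\dsigma \le \frac{C_P}{2} \int_{S^{d-1}} |\nabla_\sigma \log f|^2 f \dsigma. \]

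\emph{Step 3: apply \eqref{e:gamma2} to $F=f$} to control the right-hand side of the previous display by the $\Gamma^2_{\cB,\Delta}$-integral:
\[ \frac{C_P}{2}\int_{S^{d-1}} |\nabla_\sigma \log f|^2 f \dsigma \le \frac{C_P}{2C_K}\int_{S^{d-1}}\Gamma^2_{\cB,\Delta}(\log f, \log f) f \dsigma. \]
Multiplying through by $2C_K/C_P$ establishes \eqref{e:log-sob} with $\Lambda_b = 2C_K/C_P$.

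There is no real obstacle in this lemma: everything reduces to the algebraic inequality of Step~1 and to checking that the symmetry and positivity assumptions propagate to $\sqrt{f}$, which they do trivially. The only mild point worth flagging is that the factor $2$ in $\Lambda_b = 2C_K/C_P$ is sharp for this argument precisely because $(\sqrt{a}+\sqrt{b})^2 \le 2(a+b)$ saturates when $a=b$; any improvement in the final constant would have to come from revisiting Steps~2--3, not from Step~1.
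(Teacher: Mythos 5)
Your proof is correct and is the same argument as the paper's: the elementary inequality $\tfrac{(a-b)^2}{a+b}\le 2(\sqrt a-\sqrt b)^2$, then \eqref{e:hardy} applied to $\sqrt f$ together with $|\nabla_\sigma\sqrt f|^2=\tfrac14|\nabla_\sigma\log f|^2 f$, then \eqref{e:gamma2} applied to $\log f$. (Incidentally, your final constant $\Lambda_b=2C_K/C_P$ matches the statement; the prose of the paper's proof writes the reciprocal $C_P/(2C_K)$, which is a slip there, while its displayed chain of inequalities agrees with yours.)
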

\begin{proof}
  Using first  the elementary inequality $\frac{(a-b)^2}{a+b} \le 2 (\sqrt{a}-\sqrt{b})^2$, second \eqref{e:hardy} for $F =\sqrt{f}$ and third \eqref{e:gamma2}  yields \eqref{e:log-sob} with $\Lambda_b = \frac{C_P}{2C_K}$. Indeed,
  \begin{align*}
    \iint_{S^{d-1} \times S^{d-1}} \frac{(f(\sigma') - f(\sigma))^2}{f(\sigma')+f(\sigma)}
    & b (\sigma' \cdot \sigma) \dsigma' \dsigma \\
    & \le 2\iint_{S^{d-1} \times S^{d-1}} \left(\sqrt{f}(\sigma') - \sqrt{f}(\sigma) \right)^2 b (\sigma' \cdot \sigma) \dsigma' \dsigma \\
    & \le 2C_P \int_{S^{d-1}} \left|\nabla_\sigma \sqrt{f} \right|^2 \dsigma \\
    & = \frac{C_P}2 \int_{S^{d-1}} \left|\nabla_\sigma \log f \right|^2 f \dsigma \\
    &\le \frac{C_P}{2C_K} \int_{S^{d-1}} \Gamma^2_{\cB,\Delta} (\log f,\log f) f \dsigma.  \qedhere
  \end{align*}
\end{proof}

\subsection{A $\Gamma^2$ criterion}

In order to compute   $\Gamma^2_{\cB,\Delta}  = \frac12 \left( \cB |\nabla_\sigma F|^2 - 2\nabla_\sigma F \cdot \nabla_\sigma \cB F \right),$
we will have to compute $\nabla_\sigma \cB F$. Such a computation is easy when $d=2$ since in this case the sphere is a circle. But it is involved
for larger dimensions. We shall see soon that the following computation  made by C.~Villani in \cite[Lemma~2]{villani1998boltzmann} is very useful.
\begin{lemma}[Gradient of $\cB F$]\label{e:grad}
  Let $\sigma \in S^{d-1}$ and $G \colon S^{d-1} \to \R$ smooth. We have
  \[ \int_{S^{d-1}} G (\sigma') \nabla_{\sigma} [ b( \sigma' \cdot \sigma)] \dsigma' = \int_{S^{d-1}} \bigg[ M_{\sigma',\sigma} \nabla_\sigma G (\sigma') \bigg] b(\sigma' \cdot \sigma) \dsigma'\]
  where $M_{\sigma',\sigma} \colon T_{\sigma'} S^{d-1} \to T_{\sigma} S^{d-1}$ is defined by $M_{\sigma',\sigma} (x) = (\sigma' \cdot \sigma) x - (\sigma \cdot x) \sigma'$. 
\end{lemma}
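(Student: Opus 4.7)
The plan is to read the identity as an integration by parts on the sphere, where $M_{\sigma',\sigma}$ plays the role of a ``transfer operator'' relating the $\sigma$-gradient of $b(\sigma'\cdot\sigma)$ to its $\sigma'$-gradient. First I would isolate the pointwise identity
\[ \nabla_\sigma[b(\sigma'\cdot\sigma)] = -M_{\sigma',\sigma}\bigl(\nabla_{\sigma'}[b(\sigma'\cdot\sigma)]\bigr). \]
With $c := \sigma'\cdot\sigma$, the surface gradients are $\nabla_\sigma[b(c)] = b'(c)(\sigma'-c\sigma)$ and $\nabla_{\sigma'}[b(c)] = b'(c)(\sigma-c\sigma')$, so after dividing by $b'(c)$ this reduces to the elementary algebraic verification
\[ M_{\sigma',\sigma}(\sigma - c\sigma') = c(\sigma - c\sigma') - (1-c^2)\sigma' = c\sigma - \sigma' = -(\sigma' - c\sigma), \]
using $\sigma \cdot (\sigma - c\sigma') = 1-c^2$.

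Next, I would test both sides of the claimed identity against an arbitrary $e \in T_\sigma S^{d-1}$ and introduce the adjoint $M^*_{\sigma',\sigma}(e) = ce - (e\cdot\sigma')\sigma$ of $M_{\sigma',\sigma}$, which maps $T_\sigma S^{d-1}$ into $T_{\sigma'}S^{d-1}$ (a short check: $M^*(e)\cdot\sigma' = c(e\cdot\sigma') - (e\cdot\sigma')(\sigma\cdot\sigma') = 0$). The pointwise identity converts the left hand side to
\[ e \cdot \int_{S^{d-1}} G(\sigma')\,\nabla_\sigma[b(c)] \dsigma' = -\int_{S^{d-1}} G(\sigma')\,M^*_{\sigma',\sigma}(e) \cdot \nabla_{\sigma'}[b(c)] \dsigma'. \]
Because $\sigma' \mapsto G(\sigma')M^*_{\sigma',\sigma}(e)$ is a tangent vector field on the closed manifold $S^{d-1}$, surface integration by parts applies without boundary term, leaving $\int_{S^{d-1}} b(c)\,\mathrm{div}_{\sigma'}[G M^*(e)] \dsigma'$.

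The third step, which I expect to be the main computational obstacle, is to show that for fixed $\sigma$ and $e$ the tangent field $\sigma' \mapsto M^*_{\sigma',\sigma}(e)$ is surface-divergence-free. Working in any orthonormal basis $\{f_i\}$ of $T_{\sigma'}S^{d-1}$, the ambient derivative of $ce - (e\cdot\sigma')\sigma$ in the direction $f_i$ is $(f_i\cdot\sigma)e - (e\cdot f_i)\sigma$, whose component along $f_i$ equals $(f_i\cdot\sigma)(f_i\cdot e) - (e\cdot f_i)(f_i\cdot\sigma) = 0$; summing over $i$ gives $\mathrm{div}_{\sigma'}M^*(e) = 0$. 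Combined with the product rule and $M^{**} = M$, this collapses the integration by parts to
\[ e \cdot \int_{S^{d-1}} G(\sigma')\,\nabla_\sigma[b(c)] \dsigma' = \int_{S^{d-1}} b(c)\, M_{\sigma',\sigma}(\nabla_{\sigma'} G)\cdot e \dsigma', \]
which is exactly $e$ paired with the right hand side of the lemma. Arbitrariness of $e$ concludes. The only real delicate point is a consistent separation of tangential and ambient quantities — verifying $M^*(e) \in T_{\sigma'}S^{d-1}$ and implementing the surface divergence correctly — but once the pointwise identity and the vanishing of $\mathrm{div}_{\sigma'}M^*(e)$ are in place, the argument is essentially mechanical.
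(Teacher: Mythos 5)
Your argument is correct. The paper itself does not reprove this lemma — it cites Villani~\cite[Lemma~2]{villani1998boltzmann} — but your integration-by-parts route is exactly the expected one, and it is complete: the pointwise identity $\nabla_\sigma[b(\sigma'\cdot\sigma)]=-M_{\sigma',\sigma}\nabla_{\sigma'}[b(\sigma'\cdot\sigma)]$ checks out, your adjoint $M^*_{\sigma',\sigma}(e)=ce-(\sigma'\cdot e)\sigma$ is tangent at $\sigma'$ and is in fact $M_{\sigma,\sigma'}(e)$ (the same adjoint relation $x\cdot M_{\sigma',\sigma}y=M_{\sigma,\sigma'}x\cdot y$ the paper uses later in the $\Gamma^2$ computation), and the computation $\mathrm{div}_{\sigma'}M_{\sigma,\sigma'}(e)=0$ is the crucial observation that makes the product rule collapse to the claimed identity.
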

\begin{remark}
  The operator $M_{\sigma',\sigma}$ is the restriction of $P_{\sigma',\sigma} \colon \R^d \to \R^d$ that maps $\sigma'$ to $\sigma$, is a rotation in the plane generated by $\sigma$ and $\sigma'$,
  and equals $(\sigma' \cdot \sigma)$ times the identity on the orthogonal of this plane \cite[\S~4.3]{imbert2024monotonicityfisherinformationboltzmann}. 
\end{remark}
With this lemma in hand, we can obtain the following (non-integrated) $\Gamma^2$ criterion. 
\begin{proposition}[{\cite[Lemma~6.1]{imbert2024monotonicityfisherinformationboltzmann}}]
  Let $b$ satisfy \eqref{e:levy} and $d >2$. For all smooth functions $F$ such that $F(-\sigma)=F(\sigma)$,
  we have
  \( \Gamma^2_{\cB,\Delta} (F,F) \ge C_K |\nabla_\sigma F|^2 \)
  with
  \[ C_K = \frac{d-2}{2(d-1)} \int_{S^{d-1}} ( 1 - (e_1 \cdot \sigma')^2) b (e_1 \cdot \sigma') \dsigma'.\]
\end{proposition}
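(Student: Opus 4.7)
My plan is to derive an explicit integral representation of $\Gamma^2_{\cB,\Delta}(F,F)(\sigma)$, extract a pointwise-in-$\sigma'$ non-negative integrand via an algebraic identity involving the partial isometry $M_{\sigma',\sigma}$, and then evaluate the resulting lower bound by spherical averaging. Starting from the definition
\[ \Gamma^2_{\cB,\Delta}(F,F)(\sigma) = \tfrac12 \cB|\nabla_\sigma F|^2(\sigma) - \nabla_\sigma F(\sigma)\cdot\nabla_\sigma\cB F(\sigma), \]
I would compute $\nabla_\sigma\cB F$ by applying Lemma~\ref{e:grad} to $G(\sigma')=F(\sigma')-F(\sigma)$, treating the subtracted term as a constant in $\sigma'$; this subtraction is essential to justify the otherwise divergent pieces, and it produces
\[ \nabla_\sigma\cB F(\sigma) = \int_{S^{d-1}}\bigl[M_{\sigma',\sigma}\nabla_{\sigma'}F(\sigma') - \nabla_\sigma F(\sigma)\bigr] b(\sigma\cdot\sigma')\dsigma'. \]
Writing $y=\nabla_\sigma F(\sigma)$, $x(\sigma')=\nabla_{\sigma'}F(\sigma')$ and $c=\sigma\cdot\sigma'$, assembling the two pieces yields the symmetric representation
\[ \Gamma^2_{\cB,\Delta}(F,F)(\sigma) = \tfrac12 \int_{S^{d-1}}\bigl[|y|^2 + |x(\sigma')|^2 - 2\, y\cdot M_{\sigma',\sigma}x(\sigma')\bigr] b(c)\dsigma'. \]

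Second, I would exploit the geometry of $M_{\sigma',\sigma}$ described in the remark after Lemma~\ref{e:grad}: it acts as an isometry on the one-dimensional component of $T_{\sigma'}S^{d-1}$ lying in the plane $\mathrm{Span}(\sigma,\sigma')$, and scales the $(d-2)$-dimensional component perpendicular to that plane by $c$. Decomposing $x \in T_{\sigma'}S^{d-1}$ as $x_\parallel + x_\perp$ and analogously $y \in T_\sigma S^{d-1}$ as $y_\parallel + y_\perp$, I obtain $|M_{\sigma',\sigma}x|^2 = |x|^2 - (1-c^2)|x_\perp|^2$ and, because the adjoint $M_{\sigma',\sigma}^T$ has the same structure, $|M_{\sigma',\sigma}^T y|^2 = |y|^2 - (1-c^2)|y_\perp|^2$. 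Combining these gives the key pointwise algebraic identity
\[ |y|^2 + |x|^2 - 2\, y\cdot M_{\sigma',\sigma}x \;=\; (1-c^2)|y_\perp|^2 + |M_{\sigma',\sigma}^T y - x|^2, \]
and since $|M_{\sigma',\sigma}^T y - x|^2 \ge 0$, the integrand in my representation of $\Gamma^2_{\cB,\Delta}$ is pointwise bounded below by $(1-c^2)|y_\perp(\sigma')|^2 b(c)$.

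Finally, I would evaluate the resulting lower bound by spherical averaging. Parametrizing $\sigma' = c\sigma + \sqrt{1-c^2}\,\omega$ with $\omega\in S^{d-2}\subset T_\sigma S^{d-1}$, we have $|y_\perp|^2 = |y|^2 - (y\cdot\omega)^2$ and the measure factorizes as $\dsigma' = (1-c^2)^{(d-3)/2}\dd c\,\dd\omega$. The average of $(y\cdot\omega)^2$ over $\omega\in S^{d-2}$ against the invariant probability measure equals $|y|^2/(d-1)$, so the $\omega$-average of $|y_\perp|^2$ equals $\frac{d-2}{d-1}|y|^2$, which is independent of $c$ and pulls out of the integral to produce exactly $C_K|y|^2$. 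The main obstacle is the second step: finding the algebraic identity that bridges an integrand depending on $x(\sigma')$ — the gradient at the variable point $\sigma'$ — to a lower bound depending only on $y = \nabla_\sigma F(\sigma)$ at the fixed point $\sigma$. This identity relies crucially on $M_{\sigma',\sigma}$ and its adjoint sharing the common shrinkage factor $(1-c^2)$ on perpendicular components, and the dimension condition $d\ge 3$ enters precisely through the $(d-2)/(d-1)$ factor arising in the $\omega$-average, which vanishes in the planar case.
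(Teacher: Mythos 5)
Your proposal is correct and follows essentially the same route as the paper: compute $\nabla_\sigma\cB F$ via Villani's gradient formula (Lemma~\ref{e:grad}), obtain the symmetric integral representation of $\Gamma^2_{\cB,\Delta}$, complete the square to drop a non-negative term, and evaluate the remaining lower bound by spherical averaging. Your pointwise identity $(1-c^2)|y_\perp|^2 + |M_{\sigma,\sigma'}y-x|^2$ is just a rewriting of the paper's $|x'-M_{\sigma,\sigma'}x|^2+|x|^2-|M_{\sigma,\sigma'}x|^2$ using the explicit geometry of $M$ on parallel/perpendicular components, and you additionally spell out the final integral computation (the $\frac{d-2}{d-1}$ average over $S^{d-2}$) that the paper states without detail.
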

\begin{proof}
  We start with computing $\nabla_\sigma \cB F$.
  \begin{align*}
    \nabla_\sigma \cB F  &= \nabla_\sigma \int_{S^{d-1}} (F(\sigma') - F (\sigma)) b (\sigma'\cdot \sigma) \dsigma' \\
                         & =   \int_{S^{d-1}}  \bigg\{ -\nabla_\sigma F (\sigma)  b (\sigma'\cdot \sigma)  +  (F(\sigma') - F (\sigma)) \nabla_\sigma \bigg[ b (\sigma'\cdot \sigma) \bigg] \bigg\} \dsigma' \\
                         & =   \int_{S^{d-1}}  \bigg\{ -\nabla_\sigma F (\sigma)  b (\sigma'\cdot \sigma)  +  M_{\sigma',\sigma} \nabla_\sigma F(\sigma')  b (\sigma'\cdot \sigma) \bigg\} \dsigma'. 
  \end{align*}
  We now use  definitions of $\Gamma_\Delta$ and $\Gamma^2_{\cB,\Delta}$ in order to write,
  \begin{align*}
    \Gamma^2_{\cB,\Delta} & = \frac12 \left( \cB |\nabla_\sigma F|^2 - 2\nabla_\sigma F \cdot \nabla_\sigma \cB F \right) \\
                          & = \frac12 \int_{S^{d-1}} \bigg\{ |\nabla_\sigma F (\sigma')|^2 + |\nabla_\sigma F(\sigma)|^2 - 2 \nabla_\sigma F(\sigma) \cdot M_{\sigma',\sigma} \nabla_\sigma F(\sigma')  \bigg\} b(\sigma' \cdot \sigma) \dsigma'. \\
    \intertext{We now use that $x \cdot M_{\sigma',\sigma} y = M_{\sigma,\sigma'} x \cdot y$ for $x \in T_\sigma S^{d-1}$ and $y \in T_{\sigma'} S^{d-1}$, }
                          & = \frac12 \int_{S^{d-1}} \bigg\{ |\nabla_\sigma F (\sigma')|^2 + |\nabla_\sigma F(\sigma)|^2 - 2 M_{\sigma,\sigma'} \nabla_\sigma F(\sigma) \cdot  \nabla_\sigma F(\sigma')  \bigg\} b(\sigma' \cdot \sigma) \dsigma' \\
                          & = \frac12 \int_{S^{d-1}} \bigg\{ |\nabla_\sigma F (\sigma') - M_{\sigma,\sigma'} \nabla_\sigma F(\sigma)  |^2 + |\nabla_\sigma F(\sigma)|^2 - |M_{\sigma,\sigma'} \nabla_\sigma F(\sigma)|^2  \bigg\} b(\sigma' \cdot \sigma) \dsigma' \\
                              & \ge \frac12 \int_{S^{d-1}} \bigg\{ |x|^2 - |M_{\sigma,\sigma'} x |^2  \bigg\} b(\sigma' \cdot \sigma) \dsigma'  \qquad \text{   with $x=\nabla_\sigma F(\sigma)$} \\
                          & =  \frac12 |x|^2 \int_{S^{d-1}} \bigg\{ 1 - |M_{\sigma,\sigma'} e |^2  \bigg\} b(\sigma' \cdot \sigma) \dsigma'  \qquad \text{   with $e=\frac{x}{|x|} \in S^{d-1}$}.
  \end{align*}
Computing the integral in the last line yields the result. 
\end{proof}

\subsection{A Hardy-type inequality}

\begin{proposition} \label{p:hardy}
  Let $b$ satisfy \eqref{e:levy}.  For all smooth functions  $F$ such that $F(-\sigma)=F(\sigma)$,
  the Hardy-type inequality \eqref{e:hardy} holds with $C_P$ given for any $\sigma \in S^{d-1}$ by the formula,
  \[ C_P = \frac1{d-1} \int_{S^{d-1}}  (1 -(\sigma' \cdot \sigma)^2) b(\sigma' \cdot \sigma) \dsigma'.\]
\end{proposition}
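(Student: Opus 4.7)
The plan is to exploit the rotational invariance of both sides of \eqref{e:hardy} to diagonalize them simultaneously in the spherical harmonic decomposition. First, write $F = \sum_k F_k$, where $F_k$ is the $L^2$-projection of $F$ onto the degree-$k$ spherical harmonics; the parity hypothesis $F(-\sigma)=F(\sigma)$ forces $F_k \equiv 0$ for odd $k$. By the Funk--Hecke theorem, the convolution-type operator $F \mapsto \int F(\sigma') b(\sigma' \cdot \sigma)\,d\sigma'$ has each $F_k$ as an eigenfunction, which gives
\[ \iint (F(\sigma') - F(\sigma))^2\, b(\sigma' \cdot \sigma)\,\dsigma' \dsigma = 2 \sum_{k \text{ even}} \lambda_k\, \|F_k\|_{L^2}^2, \]
with $\lambda_k = |S^{d-2}|\int_{-1}^1 (1 - R_k(t))\, b(t)\,(1-t^2)^{(d-3)/2}\,dt$, where $R_k$ is the normalized Gegenbauer (zonal) polynomial $C_k^{(d-2)/2}(t)/C_k^{(d-2)/2}(1)$. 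Meanwhile $\int |\nabla F|^2\,d\sigma = \sum_k k(k+d-2)\|F_k\|^2$.

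Inequality \eqref{e:hardy} thus reduces to the scalar estimates $2\lambda_k \le C_P \cdot k(k+d-2)$ for each even $k \ge 2$. Rewriting $C_P = \frac{|S^{d-2}|}{d-1}\int_{-1}^1 b(t)(1-t^2)^{(d-1)/2}\,dt$ and using $b \ge 0$, these estimates follow from the pointwise polynomial bound
\[ 1 - R_k(t) \le \frac{k(k+d-2)}{2(d-1)}\,(1 - t^2), \qquad t \in [-1,1],\ k \text{ even},\ k \ge 2. \]
The case $k=2$ is an \emph{equality} (since $R_2(t) = (dt^2 - 1)/(d-1)$), which reveals in particular that $C_P$ is the sharp constant on the degree-two subspace.

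For $k \ge 4$, set $\Psi_k(t) := \frac{k(k+d-2)}{2(d-1)}(1-t^2) - 1 + R_k(t)$. The endpoint values $R_k(\pm 1) = 1$ (even $k$) and the Gegenbauer ODE read off at $t = \pm 1$ give $R_k'(\pm 1) = \pm k(k+d-2)/(d-1)$; together these imply $\Psi_k(\pm 1) = \Psi_k'(\pm 1) = 0$, so $(1-t^2)^2$ divides $\Psi_k$ and $\Psi_k = (1-t^2)^2 Q_{k-4}(t)$ for a polynomial $Q_{k-4}$ of degree $k - 4$. A further Taylor expansion yields $Q_{k-4}(\pm 1) = \frac{k(k+d-2)\,[k(k+d-2) - 2d]}{8(d^2-1)} > 0$ for $k \ge 4$. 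It then remains to show $Q_{k-4} \ge 0$ on the full interval $[-1,1]$.

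The main obstacle is exactly this last step: uniform non-negativity of $Q_{k-4}$ on $[-1,1]$ for every even $k \ge 4$. No soft argument based only on $R_k(\pm 1) = 1$ and $|R_k| \le 1$ can produce the precise coefficient $\frac{k(k+d-2)}{2(d-1)}$ that comes from the sharp $k=2$ case; one has to invoke the algebraic structure of Gegenbauer polynomials, either via induction on $k$ through the three-term recurrence, or by exhibiting an explicit manifestly non-negative formula (as in the toy case $d = 3,\ k = 4$, where $Q_0 \equiv 35/8$). This is where essentially all the technical content of the proof resides.
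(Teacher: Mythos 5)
Your reduction is exactly the one the paper uses: diagonalize both quadratic forms in spherical harmonics via Funk--Hecke, observe that the parity assumption kills odd degrees, and reduce \eqref{e:hardy} to the single pointwise estimate $1 - P_{2\ell}(t) \le \frac{\lambda_{2\ell}}{\lambda_2}\bigl(1 - P_2(t)\bigr)$ on $[-1,1]$, which with $P_2(t) = t^2 + (1-t^2)/(d-1)$ is precisely your bound $1 - R_k(t) \le \frac{k(k+d-2)}{2(d-1)}(1-t^2)$. This is Lemma~\ref{l:hardy} plus Proposition~\ref{p:legendre} in the paper, and your check that $k=2$ is an equality correctly identifies why the constant is sharp. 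So the architecture is right and matches the source.

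The gap is the one you flag yourself: you have not proved the pointwise Gegenbauer inequality for $k \ge 4$, and this is the entire technical content of Proposition~\ref{p:legendre}. Your factoring $\Psi_k = (1-t^2)^2 Q_{k-4}$ with $Q_{k-4}(\pm1) > 0$ is a correct observation but does not imply $Q_{k-4} \ge 0$ on $[-1,1]$, and you offer no mechanism (induction via the three-term recurrence, or an explicit positive expansion) that would close it. The paper's route is different from the one you sketch: it invokes an explicit integral representation of Legendre/Gegenbauer polynomials (of Laplace/Mehler type), from which the comparison $\frac{1-P_{2\ell}}{\lambda_{2\ell}} \le \frac{1-P_2}{\lambda_2}$ can be read off pointwise; this sidesteps the polynomial-factoring approach and its positivity obstruction entirely. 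As it stands, your proposal correctly identifies the key lemma but leaves it unproved, so the argument is incomplete.
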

The proof of this proposition relies on the spectral properties of the operators $\Delta$ and $\cB$. We recall that
$\cB$ is defined by,
\[ \cB f (\sigma) = \int_{S^{d-1}} (f(\sigma') - f(\sigma)) b (\sigma' \cdot \sigma) \dsigma' \]
and that $\cQ F (z,r,\sigma) = \alpha (r) \cB F(z,r,\cdot) (\sigma).$ Let us keep in minde that we already used above that
this integro-differential operator commutes with the Laplacian. 

Spectral properties of the Laplacian on the sphere are well-known, see for instance \cite{efthimiou2014}.
\begin{itemize}
\item Its eigenvalues are $\lambda_\ell = \ell (\ell+d-2)$ for any $\ell \ge 0$.
\item The eigenspace associated with $\lambda_\ell$ is finite dimensional and composed
  of $\ell$-spherical harmonics, that is to say of restrictions to the sphere of
  $\ell$-homogeneous harmonic polynomials.
\item Any $\ell$-spherical harmonic is a linear combination of rotations of $Y_\ell (\sigma)= a P_\ell (k \cdot \sigma)$ where $a \in \R$, $k \in S^{d-1}$ and $P_\ell$ is the Legendre polynomial of degree $\ell$.  
\end{itemize}
We already mentioned that $\cB$ and $\Delta$ commute. And both operators  commute with rotations.

Using these facts about $\Delta$ and $\cB$, one can prove  \cite[Lemmas~7.3 and 7.5]{imbert2024monotonicityfisherinformationboltzmann} that
\[ \cB Y_\ell = \tilde \lambda_\ell Y_\ell \]
with
\[ \tilde \lambda_\ell = \int_{S^{d-1}} (1-P_\ell (e_1 \cdot \sigma')) b (e_1 \cdot \sigma') \dsigma'\]
where $P_\ell$ is the $\ell$-Legendre polynomial with the normalization condition $P_\ell (1)=1$.
With those spectral properties, it is now easy to get the following estimate for $C_P$.
\begin{lemma}[{\cite[Lemma~7.4]{imbert2024monotonicityfisherinformationboltzmann}}]
\label{l:hardy}  The Hardy-type inequality~\eqref{e:hardy} holds true for all smooth functions $F$ such that $F(-\sigma)=F(\sigma)$ with
  \[ C_P = 2 \sup_{\ell \ge 1} \frac{\tilde{\lambda}_{2 \ell}}{\lambda_{2\ell}}.\]
\end{lemma}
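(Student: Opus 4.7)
The plan is to diagonalize both sides of the Hardy-type inequality \eqref{e:hardy} simultaneously in the spherical harmonic basis recalled just above the statement. Given a smooth $F$ on $S^{d-1}$, I would decompose $F = \sum_{\ell \ge 0} F_\ell$ with $F_\ell$ the component of $F$ in the eigenspace of $-\Delta$ associated with $\lambda_\ell = \ell(\ell + d - 2)$. Since any $\ell$-spherical harmonic is the restriction to the sphere of an $\ell$-homogeneous polynomial, it satisfies $Y_\ell(-\sigma) = (-1)^\ell Y_\ell(\sigma)$; the assumption $F(-\sigma) = F(\sigma)$ therefore kills every odd-degree component, and only the $F_{2\ell}$ survive.

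Next I would compute each side of \eqref{e:hardy} in this basis. For the right-hand side, integration by parts together with Parseval gives directly
\[
\int_{S^{d-1}} |\nabla_\sigma F|^2 \dsigma = -\int_{S^{d-1}} F \, \Delta F \dsigma = \sum_{\ell \ge 0} \lambda_{2\ell} \, \|F_{2\ell}\|_{L^2(S^{d-1})}^2.
\]
For the left-hand side, using the symmetry $b(\sigma' \cdot \sigma) = b(\sigma \cdot \sigma')$, I would symmetrize to identify the double integral with the Dirichlet form of $-\cB$,
\[
\iint_{S^{d-1} \times S^{d-1}} (F(\sigma') - F(\sigma))^2 \, b(\sigma' \cdot \sigma) \dsigma' \dsigma = -2 \int_{S^{d-1}} F \, \cB F \dsigma.
\]
Because $b$ depends only on $\sigma' \cdot \sigma$, the operator $\cB$ is rotation-invariant and hence preserves every spherical harmonic eigenspace, with eigenvalues whose magnitudes are the nonnegative numbers $\tilde \lambda_\ell$ recalled before the statement. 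Parseval then yields
\[
\iint_{S^{d-1} \times S^{d-1}} (F(\sigma') - F(\sigma))^2 \, b(\sigma' \cdot \sigma) \dsigma' \dsigma = 2 \sum_{\ell \ge 0} \tilde \lambda_{2\ell} \, \|F_{2\ell}\|_{L^2(S^{d-1})}^2.
\]

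The inequality \eqref{e:hardy} is then equivalent to the termwise comparison $2 \tilde \lambda_{2\ell} \le C_P \lambda_{2\ell}$ for every $\ell \ge 1$ (the $\ell = 0$ mode contributes $0 = 0$ on both sides), which holds exactly when $C_P \ge 2 \sup_{\ell \ge 1} \tilde \lambda_{2\ell} / \lambda_{2\ell}$. Testing against a single even-degree spherical harmonic attaining or approaching this supremum shows the value is also necessary, so it is the sharp constant claimed in the statement.

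The main technical point is the symmetrization step when $b$ is singular so that $\int b \dsigma' = +\infty$: a naive expansion of $(F(\sigma') - F(\sigma))^2$ would split the Dirichlet form into separately infinite pieces. The clean way around this is to work modewise from the start: thanks to the Funk--Hecke formula and the rotational invariance of $b$, the action of $\cB$ on each $Y_\ell$ reduces to a one-dimensional integral of $b$ against the Legendre polynomial $P_\ell$, and the combination $1 - P_\ell(c)$ appearing in $\tilde \lambda_\ell$ matches exactly the quadratic vanishing $1 - c = O(|\sigma' - \sigma|^2)$ that condition \eqref{e:levy} is designed to handle, ensuring finiteness of each $\tilde \lambda_\ell$ and legitimizing the spectral expansion.
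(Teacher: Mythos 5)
Your proposal is correct and follows essentially the same route as the paper: decompose $F$ into even-degree spherical harmonics, identify the double integral with the Dirichlet form $-2\int F\,\cB F$, and compare the two diagonalized quadratic forms termwise. Your additional remarks on parity, on the necessity of the constant, and on how Condition~\eqref{e:levy} together with the $1-P_\ell$ cancellation guarantees finiteness of each $\tilde\lambda_\ell$ are correct and helpful, but they fill in steps the paper leaves implicit rather than change the argument.
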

\begin{proof}
  Decompose $F$ in spherical harmonics. Since it is even, it can be written
  \[ F = \sum_{\ell \ge 1} F_{2 \ell} Y_{2 \ell}.\]
  The announced formula then derives from the two elementary computations,
  \begin{align*}
    \int_{S^{d-1}} |\nabla_\sigma F|^2 &= -\int_{S^{d-1}} F \Delta_\sigma F = \sum_{\ell \ge 1} \lambda_{2 \ell} F_{2 \ell}^2, \\  
    \int_{S^{d-1}\times S^{d-1}} (F(\sigma') - F(\sigma))^2 b (\sigma' \cdot \sigma) \dsigma' \dsigma
      &= -2 \int_{S^{d-1}} F \cB F = 2 \sum_{\ell \ge 1} \tilde{\lambda}_{2 \ell} F_{2 \ell}^2. \qedhere
  \end{align*}
\end{proof}
In order to compute the supremum in the formula for $C_P$, we use an inequality about Legendre polynomials.
\begin{proposition}[{\cite[Proposition~7.6]{imbert2024monotonicityfisherinformationboltzmann}}] \label{p:legendre}
For all $\ell \ge 1$, we have $\frac{1-P_{2 \ell}}{\lambda_{2\ell}} \le \frac{1-P_2}{\lambda_2}$. 
\end{proposition}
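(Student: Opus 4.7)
The plan is to reformulate the inequality using the explicit form of $P_2$, identify the boundary behaviour forced by the Gegenbauer ODE, and then close via an integral representation.

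First, I would compute $P_2$. As it is the unique even polynomial of degree two with $P_2(1)=1$ solving the Gegenbauer ODE $(1-c^2)P_2''-(d-1)cP_2'+\lambda_2 P_2=0$ (with $\lambda_2=2d$), the ansatz $P_2(c)=Ac^2+B$ gives $P_2(c)=(dc^2-1)/(d-1)$ and hence $(1-P_2(c))/\lambda_2=(1-c^2)/(2(d-1))$. The target inequality thus becomes, equivalently, the non-negativity on $[-1,1]$ of
\[ \psi_\ell(c) := \frac{\lambda_{2\ell}(1-c^2)}{2(d-1)} - \bigl(1-P_{2\ell}(c)\bigr). \]
Since $P_{2\ell}$ is even and $P_\ell(1)=1$, one has $P_{2\ell}(\pm 1)=1$, so $\psi_\ell(\pm 1)=0$. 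Evaluating the Gegenbauer ODE at $c=1$ gives $(d-1)P_\ell'(1)=\lambda_\ell$, whence $\psi_\ell'(1)=0$; the evenness of $P_{2\ell}$ then yields $\psi_\ell'(-1)=0$. Thus $\psi_\ell$ admits a double zero at each endpoint, which reflects that the bound is saturated to second order at $\pm 1$.

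To conclude $\psi_\ell\ge 0$ on $(-1,1)$, I would work with the self-adjoint form of the Gegenbauer equation $\bigl[(1-c^2)^{(d-1)/2}P_{2\ell}'\bigr]' = -\lambda_{2\ell}(1-c^2)^{(d-3)/2}P_{2\ell}$. Integrating once from $c$ to $1$, and using that the boundary term at $c=1$ vanishes, yields
\[ (1-c^2)^{(d-1)/2}P_{2\ell}'(c)=\lambda_{2\ell}\int_c^1(1-s^2)^{(d-3)/2}P_{2\ell}(s)\,ds;\]
integrating $P_{2\ell}'$ a second time, together with the orthogonality relation $\int_{-1}^1(1-s^2)^{(d-3)/2}P_{2\ell}(s)\,ds=0$ (valid for $\ell\ge 1$), provides an integral representation of $1-P_{2\ell}(c)$ whose supremum on $[-1,1]$ should equal $\lambda_{2\ell}(1-c^2)/(2(d-1))$.

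The main obstacle is closing this last step rigorously. A naive estimate $|P_{2\ell}|\le 1$ fails because the outer weight $(1-t^2)^{-(d-1)/2}$ is not integrable at the endpoints, while the intermediate integral of $P_{2\ell}$ changes sign; the representation is finite only thanks to a cancellation between the outer singular weight and the orthogonality-induced vanishing of the inner integral at $t=\pm 1$, precisely matched by the double-zero structure of $\psi_\ell$ identified above. A perhaps cleaner alternative is to use the substitution $x=c^2$ and rewrite the inequality as the statement that the polynomial $R_\ell$ defined by $R_\ell(c^2)=P_{2\ell}(c)$ lies above its tangent line at $x=1$ on $[0,1]$; this reduces the task to a Sturm-Liouville argument for the Jacobi-type ODE satisfied by $R_\ell$, where the double zero of $\psi_\ell$ enters naturally as matching boundary data.
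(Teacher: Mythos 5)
Your reduction to showing $\psi_\ell(c) := \tfrac{\lambda_{2\ell}(1-c^2)}{2(d-1)} - (1-P_{2\ell}(c)) \ge 0$ on $[-1,1]$ is correct, as is the computation $P_2(c) = (dc^2-1)/(d-1)$ and the observation that $\psi_\ell$ has a double zero at $c=\pm1$ (from $P_\ell(1)=1$ and $(d-1)P_\ell'(1)=\lambda_\ell$). But the proof stops there: you yourself flag that the decisive step is missing. The difficulty is real, not presentational. Writing $1-P_{2\ell}(c) = \lambda_{2\ell}\int_c^1 (1-t^2)^{-(d-1)/2}\int_t^1(1-s^2)^{(d-3)/2}P_{2\ell}(s)\,ds\,dt$ and comparing with the $\ell=1$ case (for which the same identity gives equality) reduces the claim to the sign condition $\int_t^1(1-s^2)^{(d-3)/2}\bigl[P_{2\ell}(s)-P_2(s)\bigr]\,ds\le 0$ for all $t$; this inner integral vanishes at $t=\pm1$ and at $t=0$ by orthogonality, but on the interior its sign is governed by the oscillations of $P_{2\ell}$ and is not a priori controlled. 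The ``double zero of $\psi_\ell$'' remark identifies why the bound is tight at the endpoints, not why it holds in between. Likewise, the proposed reformulation — that $R_\ell(x)=P_{2\ell}(\sqrt{x})$ lies above its tangent at $x=1$ — is an exact restatement of the inequality, not a proof: $R_\ell$ is a degree-$\ell$ Jacobi-type polynomial that oscillates on $[0,1]$ and is certainly not convex, so no soft Sturm–Liouville principle gives the tangent-line property for free.

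The paper takes a different and more concrete route: it invokes a known (Laplace/Dirichlet-type) integral representation of the Gegenbauer/Legendre polynomials $P_\ell$, which expresses $P_\ell(\cos\theta)$ as an average over $\phi$ of $(\cos\theta+i\sin\theta\cos\phi)^\ell$ against the weight $(\sin\phi)^{d-3}$, and reduces the inequality to an elementary pointwise estimate for complex numbers $z$ in the closed unit disk comparing $1-\operatorname{Re}(z^{2\ell})$ to $1-\operatorname{Re}(z^{2})$. This sidesteps the endpoint singularity and sign-oscillation issues entirely. If you want to pursue your ODE-based strategy you would need a genuine comparison/maximum-principle argument producing a sign for the difference $P_{2\ell}-P_2$ after integration against the weight; as written, that step is not supplied and does not appear to follow by the estimates you indicate.
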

The proof of this proposition relies on known representations of Legendre polynomials and 

\begin{proof}[Proof of Proposition~\ref{p:hardy}]
  Combining  Lemma~\ref{l:hardy} with formulas for $\lambda_\ell$, $\tilde{\lambda}_\ell$ and Proposition~\ref{p:legendre}, we get the result thanks to the fact that 
  $P_2 (x) = x^2 + (1-x^2)/(d-1)$ -- recall that we normalize $P_2$ so that $P_2 (1)=1$.
\end{proof}

\section{Lob-Sobolev inequality through subordination}
\label{s:subord}

In this last section, we will see how to get the log-Sobolev inequality~\eqref{e:log-sob} in any dimension $d \ge 2$ for collision kernels
$b$ that can be written as,
\[ b_\omega (c) = \int_0^\infty \omega (t) u_t (c) \dd t \]
for an arbitrary measurable function $\omega \colon (0,\infty) \to [0,\infty)$.

Following the reasoning that we went through for $d \ge 3$, the log-Sobolev inequality is established by proving  both  a $\Gamma^2$ criterion and a Hardy-type inequality.
\begin{proposition}[$\Gamma^2$ criterion for intertwined diffusions -- {\cite[Proposition~9.1]{imbert2024monotonicityfisherinformationboltzmann}}] \label{p:gamma2sub}
The $\Gamma^2$ criterion \eqref{e:gamma2} holds for all smooth functions $F$ such that $F(-\sigma) = F(\sigma)$ with
  \[ C_K = \int_0^\infty \omega (t) \frac{1-e^{-2 \Lambdaloc t}}{2} \dt \]
\end{proposition}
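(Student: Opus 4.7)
My plan is to reduce everything to an inequality for the heat semigroup $P_t = e^{t\Delta}$ on $S^{d-1}$, by exploiting the subordination hypothesis. From $b_\omega(c) = \int_0^\infty \omega(t) u_t(c) \dt$ and the integral representation of the heat semigroup recalled in Theorem~\ref{t:subordinate}, a direct computation gives $\cB_\omega F = \int_0^\infty \omega(t)(P_t F - F)\dt$. Linearity of the iterated carré du champ in its first operator slot then produces
\[ \Gamma^2_{\cB_\omega,\Delta}(F,F) = \int_0^\infty \omega(t)\, \Gamma^2_{P_t - I,\Delta}(F,F)\dt, \]
so that the proposition reduces to a pointwise bound on the integrand, uniformly in $t$.

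I would then expand this integrand using the definition of $\Gamma^2_{\cdot,\Delta}$ recalled in the introduction. The ``identity'' piece contributes symmetrically, and one checks that
\[ \Gamma^2_{P_t - I, \Delta}(F,F) = \tfrac{1}{2}\bigl(P_t|\nabla_\sigma F|^2 + |\nabla_\sigma F|^2 - 2\, \nabla_\sigma F \cdot \nabla_\sigma P_t F \bigr). \]
The curvature of the sphere now enters through the Bakry--\'Emery gradient contraction $|\nabla_\sigma P_t F|^2 \le e^{-2\Lambdaloc t}\, P_t |\nabla_\sigma F|^2$, valid pointwise with $\Lambdaloc$ the effective Ricci-type lower bound that also governs the $\Gamma^2$ criterion for $\Delta$ alone on even functions.

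The heart of the argument is a pointwise completion-of-squares. Fix $\sigma$, and write $u = \nabla_\sigma F(\sigma)$, $v = \nabla_\sigma P_t F(\sigma)$. The gradient contraction gives $P_t|u|^2 \ge e^{2\Lambdaloc t}|v|^2$, hence
\[ P_t|u|^2 + |u|^2 - 2\, u \cdot v \ge e^{2\Lambdaloc t}|v|^2 + |u|^2 - 2\, u \cdot v. \]
The quadratic form on the right is minimised (in $v$) at $v = e^{-2\Lambdaloc t}u$, with minimum value $(1-e^{-2\Lambdaloc t})|u|^2$. Dividing by two and integrating against $\omega(t)\dt$ yields the pointwise inequality $\Gamma^2_{\cB_\omega,\Delta}(F,F) \ge C_K |\nabla_\sigma F|^2$ with $C_K$ as in the statement. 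Applying this with $F$ replaced by $\log F$ and integrating against $F\dsigma$ produces exactly \eqref{e:gamma2}.

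The step I expect to be the most delicate is the use of Bakry--\'Emery at the ``right'' Ricci parameter: the proposition must capture the improvement stemming from the hypothesis $F(-\sigma)=F(\sigma)$ (otherwise $C_K$ would be trivial in the plane, where the standard Ricci lower bound of the sphere vanishes), and identifying the correct $\Lambdaloc$ through the intertwining of $\cB_\omega$ with $\Delta$ requires care. A secondary technical point is justifying the interchange of the subordination integral in $t$ with the operators $\Delta$ and $\cB_\omega$ used to form $\Gamma^2$; this should follow from the non-negativity of $\omega$ and standard cut-off/approximation arguments on smooth even test functions.
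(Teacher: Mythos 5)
Your subordination reduction, the linearity of $\Gamma^2_{\cdot,\Delta}$ in the first operator slot, and the expansion of $\Gamma^2_{P_t-I,\Delta}(F,F)$ are all correct and in fact coincide with the paper's starting point (there $P_t-I$ is denoted $\cB_t$). The gap is in the next step: the pointwise Bakry--\'Emery gradient contraction $|\nabla_\sigma P_t F|^2 \le e^{-2\Lambdaloc t}\,P_t|\nabla_\sigma F|^2$ is false with the constant $\Lambdaloc = d+3-\frac1{d-1}$. Pointwise gradient contraction for the heat semigroup on $S^{d-1}$ is equivalent (by differentiating at $t=0$) to the pointwise Bochner bound $\Gamma^2_{\Delta,\Delta}(F,F)\ge K|\nabla_\sigma F|^2$, whose optimal constant is the Ricci lower bound $K=d-2$, which vanishes in the plane. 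The improvement to $\Lambdaloc$ in Theorem~\ref{t:local} is an \emph{integrated} inequality, specific to even functions, and it does not localize: already for $d=2$ one has $\Gamma^2_{\Delta,\Delta}(F,F)=|F''|^2$, which vanishes at inflection points where $F'\ne 0$, ruling out any positive pointwise constant, even for even $F$. Consequently your completion-of-squares can only deliver $C_K=\int_0^\infty\omega(t)\frac{1-e^{-2(d-2)t}}{2}\dt$, which is strictly weaker than the stated constant and trivial precisely in the case $d=2$ that the subordination approach is designed to cover.

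The paper avoids this by never leaving the integrated level. After writing $2\int\Gamma^2_{\cB_\omega,\Delta}(\log F,\log F)\,F\dsigma=-\int_0^\infty\langle I'(F),\,F_t-F\rangle\,\omega(t)\dt$ via Lemma~\ref{l:gateauxsub}, it uses the \emph{convexity of the Fisher information functional} $I$ to bound $-\langle I'(F),F_t-F\rangle\ge I(F)-I(F_t)$, and then Gr\"onwall combined with the integrated Theorem~\ref{t:local} gives $I(F_t)\le e^{-2\Lambdaloc t}I(F)$. In other words, the pointwise curvature input you are trying to use is replaced by an integrated decay rate of $I$ along the heat flow, which is exactly what the even-function improvement makes available; your completion-of-squares in $v$ is replaced by the first-order convexity inequality for $I$, which also only makes sense after integration in $\sigma$.
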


\begin{proposition}[A Hardy-type inequality -- {\cite[Proposition~9.2]{imbert2024monotonicityfisherinformationboltzmann}}] \label{p:hardysub}
  The Hardy-type inequality~\eqref{e:hardy} holds for all smooth functions $F$ such that $F(-\sigma) = F(\sigma)$ with
  \[ C_P = \int_0^\infty \omega (t) \frac{1-e^{-2dt}}{d} \dt .\]
\end{proposition}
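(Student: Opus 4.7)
The plan is to use the spectral decomposition of the operators $\Delta$ and $\cB_\omega$ on the sphere, following the same strategy as in Lemma~\ref{l:hardy}. The basic point is that the heat-kernel representation of $b_\omega$ turns the eigenvalues of $\cB_\omega$ into an integral transform of the eigenvalues of $\Delta$, which makes the supremum in Lemma~\ref{l:hardy} easy to control.

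First I would recall that the heat semigroup $P_t$ on $S^{d-1}$, which can be written as $P_t F(\sigma) = \int_{S^{d-1}} u_t(\sigma'\cdot\sigma) F(\sigma') \dsigma'$ by rotational invariance, diagonalizes on spherical harmonics via $P_t Y_\ell = e^{-\lambda_\ell t} Y_\ell$ with $\lambda_\ell = \ell(\ell+d-2)$. Combined with the fact that $P_t$ preserves constants, this gives for the subordinate kernel
\[
\cB_\omega F = \int_0^\infty \omega(t) \, (P_t F - F) \dt,
\]
and hence $\cB_\omega Y_\ell = -\tilde\lambda_\ell Y_\ell$ with
\[
\tilde\lambda_\ell = \int_0^\infty \omega(t) \, \bigl(1 - e^{-\lambda_\ell t}\bigr) \dt.
\]
This replaces the computation $\tilde\lambda_\ell = \int(1-P_\ell(e_1\cdot\sigma'))b(e_1\cdot\sigma')\dsigma'$ used in Section~\ref{s:curvature}: subordination lets us bypass Legendre polynomials entirely.

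Next I would invoke Lemma~\ref{l:hardy}, which yields the Hardy-type inequality \eqref{e:hardy} with
\[
C_P = 2 \sup_{\ell \ge 1} \frac{\tilde\lambda_{2\ell}}{\lambda_{2\ell}} = 2 \sup_{\ell \ge 1} \int_0^\infty \omega(t) \, \frac{1 - e^{-\lambda_{2\ell} t}}{\lambda_{2\ell}} \dt.
\]
The key observation is that, for every fixed $t>0$, the map $\lambda \mapsto (1-e^{-\lambda t})/\lambda$ is strictly decreasing on $(0,\infty)$: indeed its numerator's derivative in $\lambda$ is $e^{-\lambda t}(1+\lambda t) - 1 \le 0$, as one sees by checking the value and derivative at $\lambda t = 0$. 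Since $\omega \ge 0$, this monotonicity in $\lambda$ survives integration against $\omega$, so the supremum in $\ell$ is attained at the smallest eigenvalue $\lambda_{2\ell}$, namely $\ell = 1$ with $\lambda_2 = 2d$.

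Substituting $\lambda_2 = 2d$ yields
\[
C_P = 2 \cdot \frac{1}{2d} \int_0^\infty \omega(t)\bigl(1 - e^{-2dt}\bigr) \dt = \int_0^\infty \omega(t) \, \frac{1 - e^{-2dt}}{d} \dt,
\]
which is the announced value. The only subtle step is the first one: writing $\cB_\omega$ as an integral of $P_t - I$ requires care about the normalization of $u_t$ (it is the density of the heat semigroup, so $\int u_t(\sigma'\cdot\sigma)\dsigma' = 1$), but this is precisely the content of the integral representation described in the statement and remark before Theorem~\ref{t:subordinate}. The remaining steps are purely algebraic.
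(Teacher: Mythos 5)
Your argument is correct and follows essentially the same route as the paper: decompose in spherical harmonics, use that subordination replaces the Legendre-polynomial formula for the eigenvalues $\tilde\lambda_\ell$ of $\cB_\omega$ by $\tilde\lambda_\ell = \int_0^\infty \omega(t)(1-e^{-\lambda_\ell t})\dt$, and conclude from the monotonicity of $\lambda\mapsto(1-e^{-\lambda t})/\lambda$ that the worst case is $\ell=1$, i.e. $\lambda_2=2d$. The only presentational difference is that you route explicitly through the supremum formula of Lemma~\ref{l:hardy}, whereas the paper recomputes the quadratic forms $\int|\nabla_\sigma F|^2$ and $-2\int F\cB F$ inline; the substance is identical.
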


The $\Gamma^2$ criterion for mixed diffusions ($\cB$ and $\Delta$) relies in the log-Sobolev inequality proved by N.~Guillen and L.~Silvestre \cite[Proposition~5.7.3]{landaufisher2023}. The constant appearing in this work (and denoted by $\Lambdaloc$ in the next statement)  was later improved by S.~Ji \cite{sehyun2024}.
\begin{theorem}[The $\Gamma^2$ criterion for the Laplacian] \label{t:local}
  For all smooth functions $F$ defined on the sphere and such that $F(-\sigma)=F(\sigma)$,
  \[ \Lambdaloc \int_{S^{d-1}} |\nabla_\sigma \log F|^2 F \dsigma \le \int_{S^{d-1}} \Gamma^2_{\Delta,\Delta} (\log F,\log F) F \dsigma  \]
  with $\Lambdaloc = d+3 -\frac1{d-1}$. In particular $\Lambdaloc > d$ for $d \ge 2$. 
\end{theorem}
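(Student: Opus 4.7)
The plan is to apply the Bakry-\'Emery $\Gamma^2$ method on the sphere, with the key improvement over the bare Ricci constant coming from the antipodal symmetry $F(-\sigma)=F(\sigma)$. Setting $f = \log F$, the Bochner-Weitzenb\"ock formula on $S^{d-1}$ gives
\[ \Gamma^2_{\Delta,\Delta}(f,f) = |\mathrm{Hess}_\sigma f|^2 + (d-2)|\nabla_\sigma f|^2, \]
since the Ricci tensor of the round sphere equals $(d-2) g$. Combining this with the pointwise trace inequality $|\mathrm{Hess}_\sigma f|^2 \ge (\Delta_\sigma f)^2/(d-1)$ yields, after multiplying by $F$ and integrating,
\[ \int_{S^{d-1}} \Gamma^2_{\Delta,\Delta}(f,f) F \dsigma \ge (d-2)\int_{S^{d-1}} |\nabla_\sigma f|^2 F \dsigma + \frac{1}{d-1}\int_{S^{d-1}} (\Delta_\sigma f)^2 F \dsigma, \]
which is the standard Bakry-\'Emery bound with constant $d-2$ plus a residual Laplacian term that has not yet been used.

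The core of the proof is to extract an additional lower bound of the form $\int (\Delta_\sigma f)^2 F \dsigma \ge c \int |\nabla_\sigma f|^2 F \dsigma$ with $c$ as large as possible under the evenness constraint. For this I would integrate by parts, using $\nabla_\sigma F = F \nabla_\sigma f$, to rewrite $\int (\Delta_\sigma f)^2 F \dsigma$ as $\int |\nabla_\sigma f|^2 F \dsigma$ plus cubic and quartic correction terms in $\nabla_\sigma f$ and $\mathrm{Hess}_\sigma f$, then reapply the Bochner identity to those corrections. The decisive input from the hypothesis $F(-\sigma) = F(\sigma)$ is that $F$ admits a spherical harmonic expansion supported on \emph{even} degrees, so the first nontrivial eigenvalue of $-\Delta_\sigma$ relevant to a Poincar\'e-type step on $F$ is $\lambda_2 = 2d$ rather than $\lambda_1 = d-1$.

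Balancing the two lower bounds with optimized weights then produces an inequality of the form
\[ \int \Gamma^2_{\Delta,\Delta}(f,f) F \dsigma \ge \Lambdaloc \int |\nabla_\sigma f|^2 F \dsigma, \qquad \Lambdaloc = d+3-\tfrac{1}{d-1}, \]
and the bound $\Lambdaloc > d$ for $d \ge 2$ is then immediate since $\Lambdaloc - d = 3 - \tfrac{1}{d-1} > 0$ whenever $d \ge 2$. Heuristically, the $d-2$ originates from Ricci, and the extra $5 - \tfrac{1}{d-1}$ is a spectral-gap bonus from the symmetry.

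The main obstacle will be the bookkeeping in the second step: the cubic and quartic terms in $\nabla_\sigma f$ produced by the integrations by parts must be rearranged, via another application of Bochner together with the evenness-enhanced spectral gap, into a manifestly non-negative quadratic form in $(\nabla_\sigma f, \mathrm{Hess}_\sigma f)$ with the sharp coefficient. This delicate balancing was carried out by Guillen and Silvestre in \cite{landaufisher2023} with a somewhat weaker constant, and the value $d+3-\tfrac{1}{d-1}$ is obtained by the refined splitting of the Hessian into its trace and trace-free parts performed by S.~Ji in \cite{sehyun2024}.
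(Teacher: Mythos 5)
The paper does not prove Theorem~\ref{t:local}; it cites it as Proposition~5.7.3 of Guillen--Silvestre \cite{landaufisher2023} with the constant subsequently sharpened by Ji \cite{sehyun2024}, so there is no internal proof to compare against, and your proposal must stand on its own. You correctly reconstruct the skeleton of the cited arguments: the Bochner--Weitzenb\"ock identity $\Gamma^2_{\Delta,\Delta}(f,f) = |\mathrm{Hess}_\sigma f|^2 + (d-2)|\nabla_\sigma f|^2$ on $S^{d-1}$ (Ricci $=(d-2)g$), the pointwise trace bound $|\mathrm{Hess}_\sigma f|^2 \ge (\Delta_\sigma f)^2/(d-1)$, and the observation that evenness of $F$ confines its spherical-harmonic expansion to even modes, making $\lambda_2 = 2d$ rather than $\lambda_1 = d-1$ the operative spectral gap. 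That is an accurate account of where the improvement over the bare Ricci constant $d-2$ comes from.

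However, what you have written is an outline, not a proof. The decisive computation — the one that actually produces $\Lambdaloc = d+3-\frac1{d-1}$ — is explicitly deferred with the remark that it was ``carried out by Guillen and Silvestre \ldots\ and S.~Ji,'' which amounts to citing the theorem rather than proving it. Moreover, the intermediate step you isolate, a lower bound of the form $\int_{S^{d-1}} (\Delta_\sigma f)^2 F \dsigma \ge c \int_{S^{d-1}} |\nabla_\sigma f|^2 F \dsigma$ with $f=\log F$, is not a plain weighted Poincar\'e inequality: since $\Delta_\sigma f = \Delta_\sigma F/F - |\nabla_\sigma f|^2$, expanding it produces cubic and quartic terms in $\nabla_\sigma f$ whose recombination with the Hessian contribution, via repeated integration by parts and a trace/trace-free split of $\mathrm{Hess}_\sigma f$, is precisely where all the difficulty lies, and the evenness must be threaded through those derived quantities rather than applied to $F$ once. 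Until that bookkeeping is actually performed, your argument establishes only the elementary Bakry--\'Emery bound with constant $d-2$ plus an unused positive remainder, not the stated constant.
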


In order to reduce the proof of the monotonicity of the Fisher information along the flow of the Boltzmann equation, we first observated (see Lemma~\ref{l:gateaux-sigma}) that the Gâteaux derivative of the Fisher information with respect to the $\sigma$-variable can be written in terms of $\Gamma^2_{\cB,\Delta}$. Through minor changes to the proof of this lemma, we can prove the following one.
\begin{lemma} \label{l:gateauxsub}
  For $F$ regular enough, we have \( \langle I'(F), \cB F \rangle = -2 \int_{S^{d-1}} \Gamma^2_{\cB,\Delta} (\log F,\log F) F \dsigma. \)
\end{lemma}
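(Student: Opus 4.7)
The plan is to mimic, almost verbatim, the proof of Lemma~\ref{l:gateaux-sigma}, dropping the auxiliary variables $z,r$ and the weight $r^{d-3}\alpha(r)$ since we are now working purely on the sphere. Concretely, I would start by rewriting the Fisher information on $S^{d-1}$ in the form
\[
I(F) = \int_{S^{d-1}} \frac{|\nabla_\sigma F|^2}{F} \dsigma = \int_{S^{d-1}} \Gamma_\Delta(\log F,\log F)\, F\, \dsigma,
\]
so that the Gâteaux derivative can be computed by the Leibniz rule in the two factors $\Gamma_\Delta(\log F,\log F)$ and $F$, giving
\[
\langle I'(F), \cB F\rangle = 2\int_{S^{d-1}} \Gamma_\Delta\!\Big(\log F, \tfrac{\cB F}{F}\Big) F\, \dsigma + \int_{S^{d-1}} \Gamma_\Delta(\log F,\log F)\, \cB F\, \dsigma.
\]

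Next, using the identity $\nabla_\sigma F \cdot \nabla_\sigma (\cB F/F) = \nabla_\sigma \log F \cdot \nabla_\sigma \cB F - \Gamma_\Delta(\log F, \log F)\, \cB F$ (which is just the chain rule for the logarithm), the first term becomes $2\int \nabla_\sigma \log F \cdot \nabla_\sigma \cB F\, \dsigma$ minus a term that cancels part of the second one. After this algebraic rearrangement I land on
\[
\langle I'(F),\cB F\rangle = 2\int_{S^{d-1}} \nabla_\sigma \log F \cdot \nabla_\sigma \cB F\, \dsigma - \int_{S^{d-1}} \Gamma_\Delta(\log F,\log F)\, \cB F\, \dsigma.
\]

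The heart of the argument is then to invoke the two structural properties of $\cB$ already used in the proof of Lemma~\ref{l:gateaux-sigma}: namely $\cB$ commutes with $\Delta$ on the sphere (\cite[Lemma~2.4]{imbert2024monotonicityfisherinformationboltzmann}) and $\cB$ is self-adjoint in $L^2(S^{d-1})$. Integrating by parts in the first term gives $-2\int \log F \cdot \Delta \cB F\,\dsigma$; commuting $\Delta$ and $\cB$ and using self-adjointness turns this into $-2\int \cB\log F \cdot \Delta F\,\dsigma$, which after one more integration by parts equals $2\int \Gamma_\Delta(\cB \log F,\log F)\, F\,\dsigma$. Similarly, self-adjointness of $\cB$ moves it off $F$ in the second integral, giving $-\int \cB \Gamma_\Delta(\log F,\log F)\, F\,\dsigma$.

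Finally, recalling the definition
\[
\Gamma^2_{\cB,\Delta}(\log F,\log F) = \tfrac12\bigl(\cB \Gamma_\Delta(\log F,\log F) - 2\,\Gamma_\Delta(\cB\log F,\log F)\bigr),
\]
the two remaining integrals combine to give exactly $-2\int_{S^{d-1}} \Gamma^2_{\cB,\Delta}(\log F,\log F)\, F\, \dsigma$, which is the claim. The only real obstacles are bookkeeping ones, already resolved in the proof of Lemma~\ref{l:gateaux-sigma}: one must know that $\cB$ and $\Delta$ commute and that $\cB$ is self-adjoint; both are established in the paper under the Lévy-type assumption \eqref{e:levy}, and the even symmetry $F(-\sigma)=F(\sigma)$ is never needed here but harmless.
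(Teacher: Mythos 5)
Your proposal is correct and follows exactly the route the paper intends: the paper itself states that Lemma~\ref{l:gateauxsub} follows from Lemma~\ref{l:gateaux-sigma} ``through minor changes,'' and your argument is precisely that adaptation, dropping the $(z,r)$ variables and the weight $\alpha(r) r^{d-3}$ and replacing $\cQ$ by $\cB$. I checked the bookkeeping: the Leibniz expansion of $\langle I'(F),\cB F\rangle$, the chain-rule identity for $\nabla_\sigma\log F\cdot\nabla_\sigma(\cB F/F)$, the use of $[\cB,\Delta]=0$ and of self-adjointness of $\cB$, and the final identification against $\Gamma^2_{\cB,\Delta}(\log F,\log F)=\tfrac12(\cB\Gamma_\Delta(\log F,\log F)-2\Gamma_\Delta(\cB\log F,\log F))$ all go through with the stated factors of $2$. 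Your remark that the even symmetry of $F$ is not needed for this particular lemma is also accurate: only rotation invariance of the kernel (giving commutativity and self-adjointness) enters.
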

We now recall the definition of the spherical linear Boltzmann operator, that we shall denote $\cB_\omega$ in this section to emphasize the dependence on the weight $\omega$.
\begin{align*}
  \cB_\omega F & = \int_{S^{d-1}} ( F(\sigma') - F(\sigma) ) b_\omega (\sigma' \cdot \sigma) \dsigma' \\
  & = \int_0^\infty \int_{S^{d-1}} ( F(\sigma') - F(\sigma) ) u_t (\sigma' \cdot \sigma)  \omega (t) \dsigma' \dt \\
  & = \int_0^\infty (F_t - F) \omega (t) \dt
\end{align*}
where $F_t$ is the solution of the heat equation on the sphere with initial data $F$.
It is convenient to write
\begin{equation}\label{e:cbt}
  \cB_\omega F = \int_0^\infty \cB_t (F) \omega (t) \dt \quad \text{ with } \quad \cB_t F = F_t -F. 
\end{equation}

\begin{proof}[Proof of Proposition~\ref{p:gamma2sub}]
  Thanks to \eqref{e:cbt} and Lemma~\ref{l:gateauxsub} for $\cB_t$, we have
  \begin{align}
\nonumber  2 \int_{S^{d-1}}  \Gamma^2_{\cB,\Delta} (\log F,\log F) F \dsigma & = 2 \int_0^\infty  \int_{S^{d-1}}  \Gamma^2_{\cB_t,\Delta} (\log F,\log F) F \dsigma \omega (t) \dt \\
\nonumber                                                                      & = - \int_0^\infty \langle I'(F) , \cB_t F \rangle \omega (t) \dt \\
\nonumber                                                                      & = - \int_0^\infty \langle I'(F) ,  F_t-F \rangle \omega (t) \dt. \\
    \intertext{We now use the fact that $I$ is convex,}
    & \ge \int_0^\infty (I(F) - I (F_t)) \omega (t) \dt. \label{here}
  \end{align}
  We know that the Fisher information decreases along the flow of the heat equation. We can compute an explicit rate of convergence thanks to
  the log-Sobolev inequality proved by N.~Guillen and L.~Silvestre (recall Theorem~\ref{t:local}). Indeed, 
  \begin{align*}
    \frac{\dd}{\dt} I (F_t) &= \langle I'(F_t), \Delta F_t \rangle \\
                            & = -2 \int_{S^{d-1}} \Gamma^2_{\Delta,\Delta} (\log F_t,\log F_t) F_t \dsigma \\
    \intertext{(we used a well-known fact that can be recovered by adapting computations contained in the proof of Lemma~\ref{l:gateaux-sigma})}
    & \le - 2 \Lambdaloc I (F_t).
  \end{align*}
  In particular $I (F_t) \le \exp (-2 \Lambdaloc t) I (F)$. We conclude the proof by combining this inequality with \eqref{here}.
\end{proof}

The proof of the Hardy-type inequality in the case of subordinate kernels also relies on the spectral properties that
were used in the previous section.
\begin{proof}[Proof of Proposition~\ref{p:hardysub}]
  Let $F$ be decomposed as $\sum_{\ell \ge 1} F_{2 \ell} Y_{2 \ell}$.
  We remark that $F_t = \sum_{\ell \ge 1} e^{- \lambda_{2\ell} t} F_{2\ell} Y_{2\ell}$.
  In particular,
  \[ \cB_t F = \sum_{\ell \ge 1} \left( e^{- \lambda_{2\ell} t} -1 \right) F_{2\ell} Y_{2\ell}.\]
With such an observation in hand, we can write
  \begin{align*}
    \int_{S^{d-1} \times S^{d-1}} (F(\sigma') - F(\sigma))^2 b (\sigma' \cdot \sigma) \dsigma' \dsigma
    & = - 2 \int_{S^{d-1}} F \cB F \\
    &  = - 2 \int_0^\infty \left\{ \int_{S^{d-1}} F \cB_t F \right\} \omega (t) \dt \\
& = 2 \sum_{\ell \ge 1} \left( \int_0^\infty(1- e^{- \lambda_{2\ell} t}) \omega (t) \dt \right) F_{2\ell}^2.
  \end{align*}
  We conclude after observing that   \( \frac{1-e^{-\lambda_{2\ell} t}}{\lambda_{2\ell}} \le \frac{1-e^{-2dt}}{2d} \)
  (since $\lambda_{2\ell} \ge 2d$ for $\ell \ge 1$). 
\end{proof}

We finally check that we have all we need in order to prove Theorem~\ref{t:curvature}.
\begin{proof}[Proof of Theorem~\ref{t:curvature}]
  Combining Lemma~\ref{l:reduction} with Propositions~\ref{p:gamma2sub}, \ref{p:hardysub}, we get
  \[ \Lambda_b \ge d \frac{\int_0^\infty (1 - \exp (-2 \Lambdaloc t)) \dt}{\int_0^\infty (1 - \exp (-2 d t)) \dt}.\]
  Now the fact that $\Lambdaloc > d$ (Theorem~\ref{t:local}) implies that $\Lambda_b > d$. 
\end{proof}

\bibliographystyle{plain}
\bibliography{biblio}

\def\cprime{$'$}
\begin{thebibliography}{10}

\bibitem{zbMATH03894218}
Dominique Bakry and Michel {\'E}mery.
\newblock Diffusions hypercontractives.
\newblock S{\'e}min. de probabilit{\'e}s {XIX}, {Univ}. {Strasbourg} 1983/84,
  {Proc}., {Lect}. {Notes} {Math}. 1123, 177-206 (1985)., 1985.

\bibitem{Boltzmann1970}
Ludwig Boltzmann.
\newblock {\em Weitere Studien {\"u}ber das W{\"a}rmegleichgewicht unter
  Gasmolek{\"u}len}, pages 115--225.
\newblock Vieweg+Teubner Verlag, Wiesbaden, 1970.

\bibitem{zbMATH03126493}
T.~Carleman.
\newblock Probl{\`e}mes math{\'e}matiques dans la th{\'e}orie cinetique des
  gaz.
\newblock Publications {Scientifiques} de l'{Institut} {Mittag}-{Leffler}. 2.
  {Uppsala}: {Almqvist} \& {Wiksells} 112 p. (1957)., 1957.

\bibitem{efthimiou2014}
Costas Efthimiou and Christopher Frye.
\newblock {\em Spherical harmonics in {$p$} dimensions}.
\newblock World Scientific Publishing Co. Pte. Ltd., Hackensack, NJ, 2014.

\bibitem{landaufisher2023}
Nestor Guillen and Luis Silvestre.
\newblock {The Landau equation does not blow up}.
\newblock {\em arXiv preprint 2311.09420}, 2023.

\bibitem{he2012}
Lingbing He.
\newblock Well-posedness of spatially homogeneous {B}oltzmann equation with
  full-range interaction.
\newblock {\em Comm. Math. Phys.}, 312(2):447--476, 2012.

\bibitem{imbert2024monotonicityfisherinformationboltzmann}
Cyril Imbert, Luis Silvestre, and Cédric Villani.
\newblock On the monotonicity of the {F}isher information for the {B}oltzmann
  equation, 2024.

\bibitem{sehyun2024}
Sehyun Ji.
\newblock Bounds for the optimal constant of the bakry-\'emery {$\Gamma_2$}
  criterion inequality on {$\mathbb RP^{d-1}$}.
\newblock {\em arXiv preprint 2408.13954}, 2024.

\bibitem{landau:Coulomb:36}
L.D. Landau.
\newblock Die kinetische {G}leichung f{\"u}r den {F}all {C}oulombscher
  {W}echselwirkung.
\newblock {\em Phys. Z. Sowjet.}, 10:154, 1936.
\newblock Translation : The transport equation in the case of Coulomb
  interactions, in D. ter Haar, ed., {\em Collected papers of L.D. Landau},
  pp.~163--170. Pergamon Press, Oxford, 1981.

\bibitem{maxw:67}
J.~C. Maxwell.
\newblock On the dynamical theory of gases.
\newblock {\em Philos. Trans. Roy. Soc. London Ser. A}, 157:49--88, 1867.

\bibitem{mckean1966}
H.~P. McKean, Jr.
\newblock Speed of approach to equilibrium for {K}ac's caricature of a
  {M}axwellian gas.
\newblock {\em Arch. Rational Mech. Anal.}, 21:343--367, 1966.

\bibitem{sato}
Ken-iti Sato.
\newblock {\em L\'evy processes and infinitely divisible distributions},
  volume~68 of {\em Cambridge Studies in Advanced Mathematics}.
\newblock Cambridge University Press, Cambridge, revised edition, 2013.
\newblock Translated from the 1990 Japanese original.

\bibitem{numerics}
L.~Silvestre.
\newblock Collision kernels.
\newblock \url{https://github.com/luissilvestre/collisionkernel}, 2024.

\bibitem{toscani1992}
G.~Toscani.
\newblock New a priori estimates for the spatially homogeneous {B}oltzmann
  equation.
\newblock {\em Contin. Mech. Thermodyn.}, 4(2):81--93, 1992.

\bibitem{villani1998boltzmann}
C.~Villani.
\newblock Fisher information estimates for {B}oltzmann's collision operator.
\newblock {\em J. Math. Pures Appl. (9)}, 77(8):821--837, 1998.

\bibitem{villani-crete}
C\'edric Villani.
\newblock Fisher information in kinetic theory.
\newblock {Lecture notes from the Mathemata summer school, Work in progress}.

\end{thebibliography}
\end{document}